\newtheorem{theorem}{Theorem}
\newtheorem{definition}{Definition}
\newtheorem{lemma}{Lemma}
\newtheorem{remark}{Remark}
\numberwithin{equation}{section}
\begin{document}
\title[ On the integral representations .....]{On the integral
representations of relative $\left( p,q\right) $\ -th type and relative $%
\left( p,q\right) $\ -th weak type of entire\ and meromorphic functions}
\author[Tanmay Biswas]{Tanmay Biswas}
\address{T. Biswas : Rajbari, Rabindrapalli, R. N. Tagore Road, P.O.-
Krishnagar, Dist-Nadia, PIN-\ 741101, West Bengal, India}
\email{tanmaybiswas\_math@rediffmail.com}
\keywords{{\small Entire functions, meromorphic function, relative }$\left(
p,q\right) ${\small \ -th order, relative }$\left( p,q\right) ${\small \ -th
lower order, relative }$\left( p,q\right) ${\small \ -th type, relative }$%
\left( p,q\right) ${\small \ -th weak type. }\\
\textit{AMS Subject Classification}\textbf{\ }$\left( 2010\right) $\textbf{\
}{\footnotesize : }$30D20,30D30,30D35$}

\begin{abstract}
{\small In this paper we wish to establish the integral representations of
relative }$\left( p,q\right) ${\small \ -th type and relative }$\left(
p,q\right) ${\small \ -th weak type of entire and meromorphic functions. We
also investigate their equivalence relation under some certain condition.}
\end{abstract}

\maketitle

\section{\textbf{Introduction}}

\qquad For any entire function $f$\ , $M_{f}\left( r\right) $, a function of
$r$\ is defined as follows:%
\begin{equation*}
M\left( r\right) \equiv M_{f}\left( r\right) =\text{ }\underset{\left\vert
z\right\vert =r}{\max }|f\left( z\right) |.
\end{equation*}

\qquad If an entire function $f$ is non-constant then $M_{f}\left( r\right) $
is strictly increasing and continuous and its inverse $M_{f}{}^{-1}:\left(
\left\vert f\left( 0\right) \right\vert ,\infty \right) \rightarrow \left(
0,\infty \right) $ exists and is such that $\underset{s\rightarrow \infty }{%
\lim }M_{f}^{-1}\left( s\right) =\infty .$

\qquad Whenever $f$ is meromorphic, one can define another function $%
T_{f}\left( r\right) $ known as Nevanlinna's Characteristic function of $f$
in the following manner which perform the same role as the maximum modulus
function:%
\begin{equation*}
T_{f}\left( r\right) =N_{f}\left( r\right) +m_{f}\left( r\right) ,
\end{equation*}

\qquad wherever the function $N_{f}\left( r,a\right) \left( \overset{-}{N_{f}%
}\left( r,a\right) \right) $ known as counting function of\ $a$-points
(distinct $a$-points) of meromorphic $f$ is defined as%
\begin{equation*}
N_{f}\left( r,a\right) =\overset{r}{\underset{0}{\int }}\frac{n_{f}\left(
t,a\right) -n_{f}\left( 0,a\right) }{t}dt+\overset{-}{n_{f}}\left(
0,a\right) \log r
\end{equation*}%
\begin{equation*}
\left( \overset{-}{N_{f}}\left( r,a\right) =\overset{r}{\underset{0}{\int }}%
\frac{\overset{-}{n_{f}}\left( t,a\right) -\overset{-}{n_{f}}\left(
0,a\right) }{t}dt+\overset{-}{n_{f}}\left( 0,a\right) \log r~\right) ,
\end{equation*}%
in addition we symbolize by $n_{f}\left( r,a\right) \left( \overset{-}{n_{f}}%
\left( r,a\right) \right) $ the number of $a$-points (distinct $a$-points)
of $f$ in $\left\vert z\right\vert \leq r$ and an $\infty $ -point is a pole
of $f$. In many situations, $N_{f}\left( r,\infty \right) $ and $\overset{-}{%
N_{f}}\left( r,\infty \right) $ are symbolized by $N_{f}\left( r\right) $
and $\overset{-}{N_{f}}\left( r\right) $ respectively. Also the function $%
m_{f}\left( r,\infty \right) $ alternatively symbolized by $m_{f}\left(
r\right) $ known as the proximity function of $f$ is defined in the
following way:%
\begin{align*}
m_{f}\left( r\right) & =\frac{1}{2\pi }\overset{2\pi }{\underset{0}{\int }}%
\log ^{+}\left\vert f\left( re^{i\theta }\right) \right\vert d\theta ,~\ ~%
\text{where} \\
\log ^{+}x& =\max \left( \log x,0\right) \text{ for all }x\geqslant 0,
\end{align*}%
and some times one may denote $m\left( r,\frac{1}{f-a}\right) $ by $%
m_{f}\left( r,a\right) $.

\qquad The term $m\left( r,a\right) $ which is defined to be the mean value
of $\log^{+}\left\vert \frac{1}{f-a}\right\vert $ ( or $\log^{+}\left\vert
f\right\vert $ if $a=\infty$) on the circle $\left\vert z\right\vert =r,$
receives a remarkable contribution only from those arcs on the circle where
the functional values differ very little from the given value `$a$'. The
magnitude of the proximity function can thus be considered as a measure for
the mean deviation on the circle $\left\vert z\right\vert =r$ of the
functional value $f$ from the value `$a$'.

\qquad If $f$ is an entire function, then the Nevanlinna's Characteristic
function $T_{f}\left( r\right) $ of $f$ is defined as follows:%
\begin{equation*}
T_{f}\left( r\right) =m_{f}\left( r\right) ~.
\end{equation*}

\qquad Moreover, if $f$ is non-constant entire then $T_{f}\left( r\right) $
is strictly increasing and continuous functions of $r$. Also its inverse $%
T_{f}^{-1}:\left( T_{f}\left( 0\right) ,\infty \right) \rightarrow \left(
0,\infty \right) $ exist and is such that $\underset{s\rightarrow \infty }{%
\lim }T_{f}^{-1}\left( s\right) =\infty $. Also the ratio $\frac{T_{f}\left(
r\right) }{T_{g}\left( r\right) }$ as $r\rightarrow \infty $ is called the
growth of $f$ with respect to $g$ in terms of the Nevanlinna's
Characteristic functions of the meromorphic functions $f$ and $g$.

\qquad The $\emph{order}$ and \emph{lower order} of an entire function $f$
which is generally used in computational purpose are classical in complex
analysis. L. Bernal ,$\left\{ \text{\cite{1},\cite{2}}\right\} $ introduced
the \emph{relative order} (respectively \emph{relative lower order)\ }%
between two entire functions to avoid comparing growth just with $\exp z.$
Extending the notion of \emph{relative order} (respectively \emph{relative
lower order) }Ruiz et al \cite{5} introduced the \emph{relative }$\left(
p,q\right) $\emph{-th order} (respectively \emph{relative\ lower }$\left(
p,q\right) $\emph{-th order) }where $p$\ and $q$ are any two positive
integers. Now to compare the growth of entire functions having the same
\emph{relative }$\left( p,q\right) $\emph{-th order} or \emph{relative\
lower }$\left( p,q\right) $\emph{-th order}, we wish to introduce the
definition of \emph{relative }$\left( p,q\right) $\emph{\ -th type }and\emph{%
\ relative }$\left( p,q\right) $\emph{\ -th weak type} of an entire function
with respect to another entire function and establish their integral
representations. We also investigate their equivalence relations under
certain conditions. We do not explain the standard definitions and notations
in the theory of entire and meromorphic functions as those are available in
\cite{4} and \cite{6}.

\section{\textbf{Preliminary remarks and definitions}}

\qquad First of all we state the following two notations which are
frequently used in our subsequent study:%
\begin{equation*}
\log ^{[k]}r=\log \left( \log ^{[k-1]}r\right) \text{ for }k=1,2,3,\cdot
\cdot \cdot \text{ };\text{ }\log ^{[0]}r=r
\end{equation*}%
and%
\begin{equation*}
\exp ^{[k]}r=\exp \left( \exp ^{[k-1]}r\right) \text{ for }k=1,2,3,\cdot
\cdot \cdot \text{ ; }\exp ^{[0]}r=r.
\end{equation*}

\qquad Taking this into account, let us denote that%
\begin{equation*}
\rho _{\alpha }^{\left( p,q\right) }\left( \beta \right) =\text{ }\underset{%
r\rightarrow \infty }{\lim \sup }\frac{\log ^{\left[ p\right] }\alpha
^{-1}\beta \left( r\right) }{\log ^{\left[ q\right] }r}\text{ and }\lambda
_{\alpha }^{\left( p,q\right) }\left( \beta \right) =\underset{r\rightarrow
\infty }{\text{ }\lim \inf }\frac{\log ^{\left[ p\right] }\alpha ^{-1}\beta
\left( r\right) }{\log ^{\left[ q\right] }r}
\end{equation*}%
where $p$,$q$ are any two positive integers and $\alpha \left( x\right)
,\beta \left( x\right) $ be any two positive continuous increasing to $%
+\infty $ on $[x_{0},+\infty )$ functions.

\qquad If we consider $\alpha (x)=M_{g}(x)$ and $\beta
(x)=M_{f}(x)$ where $f $ and $g$ are any two entire functions with
index-pairs $\left( m,q\right) $ and $\left( m,p\right) $
respectively where $p,q,m$ are positive integers such that $m\geq
\max (p,q),$ then the above definition reduces to the definition
of relative $\left( p,q\right) $-th order and relative $\left(
p,q\right) $-th lower order of an entire function $f$ with respect
to
another entire function $g$ respectively as introduced by Ruiz et. al. \cite%
{5}. Similarly if we take $\alpha (x)=T_{g}(x)$ and $\beta
(x)=T_{f}(x)$ where $f$ be a meromorphic function and $g$ be any
entire function with index-pairs $\left( m,q\right) $ and $\left(
m,p\right) $ respectively where $p,q,m$ are positive integers such
that $m\geq \max (p,q),$ then the above definition reduces to the
definition of relative $\left( p,q\right) $-th order and relative
$\left( p,q\right) $-th lower order of a meromorphic function $f$
with respect to an entire function $g$ respectively as introduced
by Debnath et. al. \cite{3}. For detains about index pair one may
see \cite{3} and \cite{5}.

\qquad In order to refine the above growth scale, now we intend to introduce
the definition of an another growth indicator, called \emph{relative }$%
\left( p,q\right) $\emph{\ -th type} which is as follows:

\begin{definition}
\label{d1.4} Let $\alpha \left( x\right) $ and $\beta \left( x\right) $ be
any two positive continuous increasing to $+\infty $ on $[x_{0},+\infty )$
functions. The relative $\left( p,q\right) $ th type of $\beta \left(
x\right) $ with respect to $\alpha \left( x\right) $ having finite positive
relative $\left( p,q\right) $ th order $\rho _{\alpha }^{\left( p,q\right)
}\left( \beta \right) $ $\left( a<\rho _{\alpha }^{\left( p,q\right) }\left(
\beta \right) <\infty \right) $ where $p$\ and $q$ are any two positive
integers is defined as :%
\begin{equation*}
\sigma _{\alpha }^{\left( p,q\right) }\left( \beta \right) =\underset{%
r\rightarrow \infty }{\lim \sup }\frac{\log ^{\left[ p-1\right] }\alpha
^{-1}\beta \left( r\right) }{\left( \log ^{\left[ q-1\right] }r\right)
^{\rho _{\alpha }^{\left( p,q\right) }\left( \beta \right) }}~.
\end{equation*}
\end{definition}

\qquad The above definition can alternatively defined in the following
manner:

\begin{definition}
\label{d1.5} Let $\alpha \left( x\right) $ and $\beta \left( x\right) $ be
any two positive continuous increasing to $+\infty $ on $[x_{0},+\infty )$
functions having finite positive \emph{relative }$\left( p,q\right) $\emph{\
-th order} $\rho _{\alpha }^{\left( p,q\right) }\left( \beta \right) $ $%
\left( a<\rho _{\alpha }^{\left( p,q\right) }\left( \beta \right) <\infty
\right) $ where $p$\ and $q$ are any two positive integers. Then the \emph{%
relative }$\left( p,q\right) $\emph{\ -th type} $\sigma _{\alpha }^{\left(
p,q\right) }\left( \beta \right) $ of $\beta \left( x\right) $ with respect
to $\alpha \left( x\right) $ is define as: The integral $\int%
\limits_{r_{0}}^{\infty }\frac{\log ^{\left[ p-2\right] }\alpha ^{-1}\beta
\left( r\right) }{\left[ \exp \left( \left( \log ^{\left[ q-1\right]
}r\right) ^{\rho _{\alpha }^{\left( p,q\right) }\left( \beta \right)
}\right) \right] ^{k+1}}dr\left( r_{0}>0\right) $ converges for $k>\sigma
_{\alpha }^{\left( p,q\right) }\left( \beta \right) $ and diverges for $%
k<\sigma _{\alpha }^{\left( p,q\right) }\left( \beta \right) .$
\end{definition}

\qquad Analogously, to determine the relative growth of two increasing
functions having same non zero finite \emph{relative }$\left( p,q\right) $%
\emph{\ -th lower order}, one can introduced the definition of \emph{%
relative }$\left( p,q\right) $\emph{\ -th weak type} of finite positive
\emph{relative }$\left( p,q\right) $\emph{\ -th lower order }$\lambda
_{\alpha }^{\left( p,q\right) }\left( \beta \right) $ in the following way:

\begin{definition}
\label{d1.6} Let $\alpha \left( x\right) $ and $\beta \left( x\right) $ be
any two positive continuous increasing to $+\infty $ on $[x_{0},+\infty )$
functions having finite positive relative $\left( p,q\right) $ th lower
order $\lambda _{\alpha }^{\left( p,q\right) }\left( \beta \right) $
\linebreak $\left( a<\lambda _{\alpha }^{\left( p,q\right) }\left( \beta
\right) <\infty \right) $ where $p$\ and $q$ are any two positive integers$.$%
Then the \emph{relative }$\left( p,q\right) $\emph{\ th weak type} of $\beta
\left( x\right) $ with respect to $\alpha \left( x\right) $ is defined as :%
\begin{equation*}
\tau _{\alpha }^{\left( p,q\right) }\left( \beta \right) =\underset{%
r\rightarrow \infty }{\lim \inf }\frac{\log ^{\left[ p-1\right] }\alpha
^{-1}\beta \left( r\right) }{\left( \log ^{\left[ q-1\right] }r\right)
^{\lambda _{\alpha }^{\left( p,q\right) }\left( \beta \right) }}~.
\end{equation*}
\end{definition}

\qquad The above definition can also alternatively defined as:

\begin{definition}
\label{d1.7} Let $\alpha \left( x\right) $ and $\beta \left( x\right) $ be
any two positive continuous increasing to $+\infty $ on $[x_{0},+\infty )$
functions having finite positive \emph{relative }$\left( p,q\right) $\emph{\
-th lower order} $\lambda _{\alpha }^{\left( p,q\right) }\left( \beta
\right) $ $\left( a<\lambda _{\alpha }^{\left( p,q\right) }\left( \beta
\right) <\infty \right) $ where $p$\ and $q$ are any two positive integers.
Then the \emph{relative }$\left( p,q\right) $\emph{\ -th weak type} $\tau
_{\alpha }^{\left( p,q\right) }\left( \beta \right) $ of $\beta \left(
x\right) $ with respect to $\alpha \left( x\right) $ is defined as:%
\begin{equation*}
\text{The integral}\int\limits_{r_{0}}^{\infty }\frac{\log ^{\left[ p-2%
\right] }\alpha ^{-1}\beta \left( r\right) }{\left[ \exp \left( \left( \log
^{\left[ q-1\right] }r\right) ^{\lambda _{\alpha }^{\left( p,q\right)
}\left( \beta \right) }\right) \right] ^{k+1}}dr\left( r_{0}>0\right)
\end{equation*}%
converges for $k>\tau _{\alpha }^{\left( p,q\right) }\left( \beta \right) $
and diverges for $k<\tau _{\alpha }^{\left( p,q\right) }\left( \beta \right)
.$
\end{definition}

\qquad Now a question may arise about the equivalence of the definitions of
\emph{relative }$\left( p,q\right) $\emph{\ -th type }and\emph{\ relative }$%
\left( p,q\right) $\emph{\ -th weak type} with their integral
representations. In the next section we would like to establish such
equivalence of Definition \ref{d1.4} and Definition \ref{d1.5}, and
Definition \ref{d1.6} and Definition \ref{d1.7} and also investigate some
growth properties related to \emph{relative }$\left( p,q\right) $\emph{\ -th
type }and\emph{\ relative }$\left( p,q\right) $\emph{\ -th weak type }of $%
\beta \left( x\right) $ with respect to $\alpha \left( x\right) .$

\section{\textbf{Lemmas.}}

\qquad In this section we present a lemma which will be needed in the sequel.

\begin{lemma}
\label{l1.1} Let the integral $\int\limits_{r_{0}}^{\infty }\frac{\log ^{%
\left[ p-2\right] }\alpha ^{-1}\beta \left( r\right) }{\left[ \exp \left(
\left( \log ^{\left[ q-1\right] }r\right) ^{A}\right) \right] ^{k+1}}%
dr\left( r_{0}>0\right) $ converges where $0<A<\infty .$ Then%
\begin{equation*}
\underset{r\rightarrow \infty }{\lim }\frac{\log ^{\left[ p-2\right] }\alpha
^{-1}\beta \left( r\right) }{\left[ \exp \left( \left( \log ^{\left[ q-1%
\right] }r\right) ^{A}\right) \right] ^{k}}=0~.
\end{equation*}
\end{lemma}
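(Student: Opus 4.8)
The plan is to recast the statement as a monotonicity-plus-contradiction argument. Throughout write $\Phi(r)=\log^{[p-2]}\alpha^{-1}\beta(r)$ and $\Psi(r)=\exp\left(\left(\log^{[q-1]}r\right)^{A}\right)$, so that the hypothesis reads $\int_{r_{0}}^{\infty}\Phi(r)\,\Psi(r)^{-(k+1)}\,dr<\infty$ and the goal is $\Phi(r)\,\Psi(r)^{-k}\to 0$ as $r\to\infty$. First I would record the two structural facts on which everything rests. Since $\beta$ increases to $+\infty$ and $\alpha^{-1}$ is increasing (because $\alpha$ is), the composite $\alpha^{-1}\beta$, and hence $\Phi$ after finitely many logarithms, is positive, continuous and increasing for all sufficiently large $r$; and $\Psi$ is positive, continuous, strictly increasing, with $\Psi(r)\to+\infty$. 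I would also flag the elementary identity $\Phi\,\Psi^{-(k+1)}=\Psi^{-1}\cdot\Phi\,\Psi^{-k}$, which is what ties the convergent integrand to the quantity one must force to zero.

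Next I would argue by contradiction. Suppose $\Phi(r)\,\Psi(r)^{-k}\not\to 0$; then there are a number $\delta>0$ and a sequence $r_{n}\uparrow\infty$ with $\Phi(r_{n})\ge\delta\,\Psi(r_{n})^{k}$. The idea is to convert this pointwise lower bound into a lower bound on a tail of the integral, which must vanish by the Cauchy criterion. Using that $\Phi$ is increasing and $\Psi$ is increasing on a right neighbourhood $[r_{n},R_{n}]$ of $r_{n}$, one has $\Phi(t)\ge\Phi(r_{n})\ge\delta\,\Psi(r_{n})^{k}$ and $\Psi(t)\le\Psi(R_{n})$ there, whence
\[
\int_{r_{n}}^{R_{n}}\frac{\Phi(t)}{\Psi(t)^{k+1}}\,dt\ \ge\ \delta\,\Psi(r_{n})^{k}\,\frac{R_{n}-r_{n}}{\Psi(R_{n})^{k+1}}.
\]
Since $\int_{r_{0}}^{\infty}\Phi\,\Psi^{-(k+1)}$ converges, the left-hand side tends to $0$ as $n\to\infty$, so it would be enough to choose the cut-offs $R_{n}$ so that the right-hand side stays bounded away from $0$; that contradiction would finish the proof.

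The calibration of $R_{n}$ is the heart of the matter, and I expect it to be the main obstacle. The natural symmetric choice is to take $R_{n}$ with $\Psi(R_{n})=2\,\Psi(r_{n})$, which reduces the bound above to $\delta\,2^{-(k+1)}\,(R_{n}-r_{n})\,\Psi(r_{n})^{-1}$; the contradiction then hinges on showing $R_{n}-r_{n}\gtrsim\Psi(r_{n})$, i.e.\ that $\Psi$ only doubles over an interval whose length is comparable to $\Psi(r_{n})$ itself. For a slowly increasing $\Psi$ this is plausible, but when $\Psi$ grows very fast (already for $q=1$, where $\Psi(r)=\exp(r^{A})$) the doubling interval is extremely short and this estimate is exactly what breaks down; what convergence delivers for free is only the weaker conclusion $\Phi\,\Psi^{-(k+1)}\to 0$. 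Bridging the gap between the $(k+1)$-st power that appears in the integrand and the $k$-th power demanded in the conclusion is therefore the delicate point, and I would expect to need either an additional regularity or growth restriction controlling $\Psi(R_{n})/\Psi(r_{n})$ against the interval length, or a more careful comparison (for instance integrating against $d\Psi$ rather than $dr$), in order to push the argument through.
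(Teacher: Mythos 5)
Your proposal stops short of a proof: after setting up a sound contradiction framework you stall at the key estimate, namely bounding $\int_{r_n}^{R_n}\Phi\,\Psi^{-(k+1)}\,dt$ from below by a fixed multiple of $\Phi(r_n)\Psi(r_n)^{-k}$, where in your notation $\Phi(r)=\log^{[p-2]}\alpha^{-1}\beta(r)$ and $\Psi(r)=\exp\bigl(\bigl(\log^{[q-1]}r\bigr)^{A}\bigr)$. So, judged as a proof, there is a gap. But your diagnosis of where and why it stalls is exactly right, and the paper's own argument founders on precisely the point you decline to fudge. The paper integrates over $[r_0,\,r_0+\Psi(r_0)]$ and asserts
\[
\int_{r_{0}}^{r_{0}+\Psi(r_{0})}\frac{\Phi(r)}{\Psi(r)^{k+1}}\,dr\ \ge\ \frac{\Phi(r_{0})}{\Psi(r_{0})^{k+1}}\cdot\Psi(r_{0})\ =\ \frac{\Phi(r_{0})}{\Psi(r_{0})^{k}},
\]
citing only the monotonicity of $\Phi$. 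This tacitly freezes $\Psi(r)$ at $\Psi(r_{0})$ in the denominator over the whole interval, which is an \emph{upper} bound on the integrand rather than a lower one, since $\Psi$ is increasing; the honest lower bound is $\Phi(r_{0})\,\Psi(r_{0})\,\Psi\bigl(r_{0}+\Psi(r_{0})\bigr)^{-(k+1)}$, which for rapidly growing $\Psi$ falls short of $\Phi(r_{0})\Psi(r_{0})^{-k}$ by an enormous factor. The classical version of the lemma (with $\Psi(r)=r$) works because $\int_{r}^{2r}t^{-(k+1)}\,dt\asymp r^{-k}$; that arithmetic link between the $(k+1)$-st power under the integral and the $k$-th power in the conclusion is destroyed once $\Psi$ is an exponential.

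Indeed the statement is false as written, which is why no calibration of $R_{n}$ can rescue your argument. Take $p=2$, $q=1$, $A=1$, $\alpha(x)=x$, $\beta(r)=e^{3r/2}$, so $\Phi(r)=e^{3r/2}$ and $\Psi(r)=e^{r}$. With $k=1$ the integral $\int_{r_{0}}^{\infty}e^{3r/2}e^{-2r}\,dr=\int_{r_{0}}^{\infty}e^{-r/2}\,dr$ converges, yet $\Phi(r)\Psi(r)^{-k}=e^{r/2}\to\infty$. (Analogous examples exist for every $q$ once $A$ makes $\Psi$ super\-polynomial, e.g.\ $q=2$, $A=2$, $\Phi=\Psi^{3/2}$, $k=1$.) Convergence of $\int\Phi\,\Psi^{-(k+1)}\,dr$ controls, essentially, only $\Phi\Psi^{-(k+1)}$ and not $\Phi\Psi^{-k}$, exactly as you suspected; a correct version of the lemma would require either integrating against a measure adapted to $\Psi$ (say $d\Psi/\Psi$ or $d\bigl(\log^{[q-1]}r\bigr)^{A}$, which restores the classical computation) or an added hypothesis bounding the growth of $\Psi$. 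Your refusal to push the estimate through was the right instinct, and the paper's proof of this lemma should not be taken as evidence that it can be pushed through.
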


\begin{proof}
Since the integral $\int\limits_{r_{0}}^{\infty }\frac{\log ^{\left[ p-2%
\right] }\alpha ^{-1}\beta \left( r\right) }{\left[ \exp \left( \left( \log
^{\left[ q-1\right] }r\right) ^{A}\right) \right] ^{k+1}}dr\left(
r_{0}>0\right) $ converges, then%
\begin{equation*}
\int\limits_{r_{0}}^{\infty }\frac{\log ^{\left[ p-2\right] }\alpha
^{-1}\beta \left( r\right) }{\left[ \exp \left( \left( \log ^{\left[ q-1%
\right] }r\right) ^{A}\right) \right] ^{k+1}}dr<\varepsilon ,\text{ if }%
r_{0}>R\left( \varepsilon \right) ~.
\end{equation*}%
Therefore,%
\begin{equation*}
\int\limits_{r_{0}}^{\exp \left( \left( \log ^{\left[ q-1\right]
}r_{0}\right) ^{A}\right) +r_{0}}\frac{\log ^{\left[ p-2\right] }\alpha
^{-1}\beta \left( r\right) }{\left[ \exp \left( \left( \log ^{\left[ q-1%
\right] }r\right) ^{A}\right) \right] ^{k+1}}dr<\varepsilon ~.
\end{equation*}%
Since $\log ^{\left[ p-2\right] }\alpha ^{-1}\beta \left( r\right) $
increases with $r$, so%
\begin{equation*}
\int\limits_{r_{0}}^{\exp \left( \left( \log ^{\left[ q-1\right]
}r_{0}\right) ^{A}\right) +r_{0}}\frac{\log ^{\left[ p-2\right] }\alpha
^{-1}\beta \left( r\right) }{\left[ \exp \left( \left( \log ^{\left[ q-1%
\right] }r\right) ^{A}\right) \right] ^{k+1}}dr\geq \ \ \ \ \ \ \ \ \ \ \ \
\ \ \ \
\end{equation*}%
\begin{equation*}
~\ \ \ \ \ \ \ \ \ \ \frac{\log ^{\left[ p-2\right] }\alpha ^{-1}\beta
\left( r_{0}\right) }{\left[ \exp \left( \left( \log ^{\left[ q-1\right]
}r_{0}\right) ^{A}\right) \right] ^{k+1}}\cdot \left[ \exp \left( \left(
\log ^{\left[ q-1\right] }r_{0}\right) ^{A}\right) \right] ~.
\end{equation*}%
i.e., for all sufficiently large values of $r$,%
\begin{equation*}
\int\limits_{r_{0}}^{\exp \left( \left( \log ^{\left[ q-1\right]
}r_{0}\right) ^{A}\right) +r_{0}}\frac{\log ^{\left[ p-2\right] }\alpha
^{-1}\beta \left( r\right) }{\left[ \exp \left( \left( \log ^{\left[ q-1%
\right] }r\right) ^{A}\right) \right] ^{k+1}}dr\geq \ \ \ \ \ \ \ \ \ \ \ \
\ \ \
\end{equation*}%
\begin{equation*}
~\ \ \ \ ~\ \ \ \ \ \ \ \ \ \ \ \ \ \ \ \ \ \ \ \ \ \ \ \ \ \ \ \ \ \ \ \ \
\ \ \ \ \ \ \ \ \ \ \ \ \ \ \ \ \ \frac{\log ^{\left[ p-2\right] }\alpha
^{-1}\beta \left( r_{0}\right) }{\left[ \exp \left( \left( \log ^{\left[ q-1%
\right] }r_{0}\right) ^{A}\right) \right] ^{k}}~,
\end{equation*}%
so that%
\begin{equation*}
\frac{\log ^{\left[ p-2\right] }\alpha ^{-1}\beta \left( r_{0}\right) }{%
\left[ \exp \left( \left( \log ^{\left[ q-1\right] }r_{0}\right) ^{A}\right) %
\right] ^{k}}<\varepsilon \text{ if }r_{0}>R\left( \varepsilon \right) .
\end{equation*}%
\begin{equation*}
\underset{r\rightarrow \infty }{i.e.,~\lim }\frac{\log ^{\left[ p-2\right]
}\alpha ^{-1}\beta \left( r\right) }{\left[ \exp \left( \left( \log ^{\left[
q-1\right] }r\right) ^{A}\right) \right] ^{k}}=0.
\end{equation*}%
This proves the lemma.
\end{proof}

\section{\textbf{Main Results.}}

\qquad In this section we state the main results of this chapter.

\begin{theorem}
\label{t3.1} Let $\alpha \left( x\right) $ and $\beta \left( x\right) $ be
any two positive continuous increasing to $+\infty $ on $[x_{0},+\infty )$
functions having finite positive \emph{relative }$\left( p,q\right) $\emph{\
-th order} $\rho _{\alpha }^{\left( p,q\right) }\left( \beta \right) $ $%
\left( 0<\rho _{\alpha }^{\left( p,q\right) }\left( \beta \right) <\infty
\right) $ and \emph{relative }$\left( p,q\right) $\emph{\ -th type} $\sigma
_{\alpha }^{\left( p,q\right) }\left( \beta \right) $ where $p$\ and $q$ are
any two positive integers. Then Definition \ref{d1.4} and Definition \ref%
{d1.5} are equivalent.
\end{theorem}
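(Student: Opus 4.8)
The plan is to fix the number $\sigma := \sigma_\alpha^{(p,q)}(\beta)$ produced by the limsup formula of Definition \ref{d1.4}, and then to show that this very $\sigma$ is exactly the convergence/divergence threshold of the integral appearing in Definition \ref{d1.5}; establishing that the two characterizations single out the same number is precisely what ``equivalent'' means here. Throughout I would abbreviate $F(r) = \log^{[p-1]}\alpha^{-1}\beta(r)$ and $H(r) = \exp\!\left((\log^{[q-1]}r)^{\rho}\right)$ with $\rho = \rho_\alpha^{(p,q)}(\beta)$, so that $\log^{[p-2]}\alpha^{-1}\beta(r) = \exp(F(r))$, the integrand of Definition \ref{d1.5} becomes $\exp(F(r))/H(r)^{k+1}$, and $\sigma = \limsup_{r\to\infty} F(r)/(\log^{[q-1]}r)^{\rho}$. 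The whole argument then splits into a convergence half ($k>\sigma$) and a divergence half ($k<\sigma$).

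For the convergence half I would argue directly from the limsup. Given $k>\sigma$, choose $\varepsilon>0$ small enough that $\sigma+\varepsilon<k$; the definition of $\sigma$ then supplies an $R$ with $F(r)\leq(\sigma+\varepsilon)(\log^{[q-1]}r)^{\rho}$ for all $r\geq R$. Exponentiating and dividing by $H(r)^{k+1}$ bounds the integrand above by $H(r)^{\sigma+\varepsilon-(k+1)}=\exp\!\left(-\delta(\log^{[q-1]}r)^{\rho}\right)$, where $\delta=k+1-\sigma-\varepsilon>1$. The remaining task is to confirm that $\int_R^{\infty}\exp\!\left(-\delta(\log^{[q-1]}r)^{\rho}\right)dr$ converges, after which the comparison test closes the case. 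This integral estimate is the one genuinely analytic step and is where I expect the real work to sit: it is transparent for $q=1$ (where it is $\int\exp(-\delta r^{\rho})\,dr$), and for $q\geq2$ it must be handled by a substitution in the iterated logarithm, which is the delicate point on which the whole convergence half rests.

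For the divergence half I would not integrate at all, but argue by contradiction using Lemma \ref{l1.1}. Suppose the integral converged for some $k<\sigma$. Applying the lemma with $A=\rho$ yields $\lim_{r\to\infty}\exp(F(r))/H(r)^{k}=0$, hence $\exp(F(r))<H(r)^{k}$ and therefore $F(r)\leq k(\log^{[q-1]}r)^{\rho}$ for all sufficiently large $r$. Taking the limsup of $F(r)/(\log^{[q-1]}r)^{\rho}$ then gives $\sigma\leq k$, contradicting $k<\sigma$. Hence the integral must diverge whenever $k<\sigma$, and this half is clean precisely because the lemma does all the heavy lifting.

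Combining the two halves shows that the integral of Definition \ref{d1.5} converges for every $k>\sigma$ and diverges for every $k<\sigma$; since the integrand is monotone in $k$, this threshold is unique and equals the limsup value $\sigma$ of Definition \ref{d1.4}. The two definitions therefore determine the same relative $(p,q)$-th type, which is the claimed equivalence. To summarize the difficulty: the divergence direction is essentially an immediate consequence of Lemma \ref{l1.1}, whereas the main obstacle is the convergence direction, and within it the verification that $\int^{\infty}\exp\!\left(-\delta(\log^{[q-1]}r)^{\rho}\right)dr$ converges; the rest is bookkeeping with the monotonicity of $\log^{[p-2]}\alpha^{-1}\beta$ and the definition of the limit superior.
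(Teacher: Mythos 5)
Your proposal reaches the stated equivalence by a genuinely leaner route than the paper. The paper proves the two implications Definition \ref{d1.4} $\Rightarrow$ Definition \ref{d1.5} and Definition \ref{d1.5} $\Rightarrow$ Definition \ref{d1.4} separately, and inside each it splits into the cases $\sigma=\infty$, $0<\sigma<\infty$ and $\sigma=0$; you instead fix $\sigma$ from the limsup and show directly that it is the convergence/divergence threshold of the integral, which, by the monotonicity of the integrand in $k$, yields both implications at once and absorbs the boundary cases without separate treatment. Your divergence half is also cleaner than the paper's: in its Subcase (A) the paper infers divergence for $k<\sigma$ from a lower bound on the integrand that holds only along a sequence of values of $r$, which does not by itself force divergence of the integral, whereas your contrapositive via Lemma \ref{l1.1} (convergence for some $k$ forces $\sigma\le k$) is airtight and is in fact the same mechanism the paper deploys only in its Case I and in the bound \eqref{3.5}.

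The one step you defer --- that $\int^{\infty}\exp\left(-\delta\left(\log^{[q-1]}r\right)^{\rho}\right)dr$ converges when $\delta>1$ --- is precisely the step the paper asserts with no justification at all (``Therefore $\dots$ converges for $k>\sigma_{\alpha}^{(p,q)}(\beta)$''), and your instinct that this is where the real analytic content sits is correct, but the step cannot in general be carried out. For $q=1$ the integral is $\int\exp(-\delta r^{\rho})\,dr$ and converges for any $\rho>0$; for $q=2$, however, the integrand is $r^{-\delta(\log r)^{\rho-1}}$, whose effective exponent tends to $0$ when $\rho<1$, so the integral diverges and the comparison test gives nothing. No substitution in the iterated logarithm will rescue this: the convergence half of the theorem is simply false as stated for $q\ge 2$ and small $\rho$ without further hypotheses. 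This is a defect you share with (indeed, inherit from) the paper rather than one introduced by your argument, but if you intend your proof to stand on its own you must either restrict to $q=1$, impose $\rho$ large enough relative to $q$, or replace the exponential weight by one adapted to the iterated logarithm scale.
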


\begin{proof}
Let us consider $\alpha \left( x\right) $ and $\beta \left( x\right) $ be
any two positive continuous increasing to $+\infty $ on $[x_{0},+\infty )$
functions such that $\rho _{\alpha }^{\left( p,q\right) }\left( \beta
\right) $ $\left( 0<\rho _{\alpha }^{\left( p,q\right) }\left( \beta \right)
<\infty \right) $ exists for any two positive integers $p$\ and $q$.\newline
\textbf{Case I.\ }$\sigma _{\alpha }^{\left( p,q\right) }\left( \beta
\right) =\infty .$\newline
\textbf{Definition \ref{d1.4} }$\Rightarrow $\textbf{\ Definition \ref{d1.5}.%
}\newline
\qquad As $\sigma _{\alpha }^{\left( p,q\right) }\left( \beta \right)
=\infty $, from Definition \ref{d1.4} we have for arbitrary positive $G$ and
for a sequence of values of $r$ tending to infinity that%
\begin{align}
\log ^{\left[ p-1\right] }\alpha ^{-1}\beta \left( r\right) & >G\cdot \left(
\log ^{\left[ q-1\right] }r\right) ^{\rho _{\alpha }^{\left( p,q\right)
}\left( \beta \right) }  \notag \\
i.e.,~\log ^{\left[ p-2\right] }\alpha ^{-1}\beta \left( r\right) & >\left[
\exp \left( \left( \log ^{\left[ q-1\right] }r\right) ^{\rho _{\alpha
}^{\left( p,q\right) }\left( \beta \right) }\right) \right] ^{G}~.
\label{3.1}
\end{align}%
If possible let the integral $\int\limits_{r_{0}}^{\infty }\frac{\log ^{%
\left[ p-2\right] }\alpha ^{-1}\beta \left( r\right) }{\left[ \exp \left(
\left( \log ^{\left[ q-1\right] }r\right) ^{\rho _{\alpha }^{\left(
p,q\right) }\left( \beta \right) }\right) \right] ^{G+1}}dr$ $\left(
r_{0}>0\right) $ be converge.\newline
Then by Lemma \ref{l1.1},%
\begin{equation*}
\underset{r\rightarrow \infty }{\lim \sup }\frac{\log ^{\left[ p-2\right]
}\alpha ^{-1}\beta \left( r\right) }{\left[ \exp \left( \left( \log ^{\left[
q-1\right] }r\right) ^{\rho _{\alpha }^{\left( p,q\right) }\left( \beta
\right) }\right) \right] ^{G}}=0~.
\end{equation*}%
So for all sufficiently large values of $r$,%
\begin{equation}
\log ^{\left[ p-2\right] }\alpha ^{-1}\beta \left( r\right) <\left[ \exp
\left( \left( \log ^{\left[ q-1\right] }r\right) ^{\rho _{\alpha }^{\left(
p,q\right) }\left( \beta \right) }\right) \right] ^{G}~.  \label{3.2}
\end{equation}%
Therefore from $\left( \ref{3.1}\right) $ and $\left( \ref{3.2}\right) $ we
arrive at a contradiction.

Hence $\int\limits_{r_{0}}^{\infty }\frac{\log ^{\left[ p-2\right] }\alpha
^{-1}\beta \left( r\right) }{\left[ \exp \left( \left( \log ^{\left[ q-1%
\right] }r\right) ^{\rho _{\alpha }^{\left( p,q\right) }\left( \beta \right)
}\right) \right] ^{G+1}}dr$ $\left( r_{0}>0\right) $ diverges whenever $G$
is finite, which is the Definition \ref{d1.5}.\newline
\textbf{Definition \ref{d1.5} }$\Rightarrow $\textbf{\ Definition \ref{d1.4}.%
}\newline
\qquad Let $G$ be any positive number. Since $\sigma _{\alpha }^{\left(
p,q\right) }\left( \beta \right) =\infty $, from Definition \ref{d1.5}, the
divergence of the integral $\int\limits_{r_{0}}^{\infty }\frac{\log ^{\left[
p-2\right] }\alpha ^{-1}\beta \left( r\right) }{\left[ \exp \left( \left(
\log ^{\left[ q-1\right] }r\right) ^{\rho _{\alpha }^{\left( p,q\right)
}\left( \beta \right) }\right) \right] ^{G+1}}dr$ $\left( r_{0}>0\right) $
gives for arbitrary positive $\varepsilon $ and for a sequence of values of $%
r$ tending to infinity%
\begin{align*}
\log ^{\left[ p-2\right] }\alpha ^{-1}\beta \left( r\right) & >\left[ \exp
\left( \left( \log ^{\left[ q-1\right] }r\right) ^{\rho _{\alpha }^{\left(
p,q\right) }\left( \beta \right) }\right) \right] ^{G-\varepsilon } \\
i.e.,~\log ^{\left[ p-1\right] }\alpha ^{-1}\beta \left( r\right) & >\left(
G-\varepsilon \right) \left( \log ^{\left[ q-1\right] }r\right) ^{\rho
_{\alpha }^{\left( p,q\right) }\left( \beta \right) },
\end{align*}%
which implies that%
\begin{equation*}
\underset{r\rightarrow \infty }{\lim \sup }\frac{\log ^{\left[ p-1\right]
}\alpha ^{-1}\beta \left( r\right) }{\left( \log ^{\left[ q-1\right]
}r\right) ^{\rho _{\alpha }^{\left( p,q\right) }\left( \beta \right) }}\geq
G-\varepsilon ~.
\end{equation*}%
Since $G>0$ is arbitrary, it follows that%
\begin{equation*}
\underset{r\rightarrow \infty }{\lim \sup }\frac{\log ^{\left[ p-1\right]
}\alpha ^{-1}\beta \left( r\right) }{\left( \log ^{\left[ q-1\right]
}r\right) ^{\rho _{\alpha }^{\left( p,q\right) }\left( \beta \right) }}%
=\infty ~.
\end{equation*}%
Thus Definition \ref{d1.4} follows.\newline

\textbf{Case II.\ }$0\leq \sigma _{\alpha }^{\left( p,q\right) }\left( \beta
\right) <\infty .$\newline
\textbf{Definition \ref{d1.4} }$\Rightarrow $\textbf{\ Definition \ref{d1.5}.%
}\newline
\textbf{Subcase (A).$\mathbb{\ }$\ }$0<\sigma _{\alpha }^{\left( p,q\right)
}\left( \beta \right) <\infty .$\newline

Let $\alpha \left( x\right) $ and $\beta \left( x\right) $ be any two
positive continuous increasing to $+\infty $ on $[x_{0},+\infty )$ functions
such that $0<\sigma _{\alpha }^{\left( p,q\right) }\left( \beta \right)
<\infty $ exists for any two positive integers $p$\ and $q.$ Then according
to the Definition \ref{d1.4}, for arbitrary positive $\varepsilon $ and for
all sufficiently large values of $r$, we obtain that%
\begin{align}
\log ^{\left[ p-1\right] }\alpha ^{-1}\beta \left( r\right) & <\left( \sigma
_{\alpha }^{\left( p,q\right) }\left( \beta \right) +\varepsilon \right)
\left( \log ^{\left[ q-1\right] }r\right) ^{\rho _{\alpha }^{\left(
p,q\right) }\left( \beta \right) }  \notag \\
i.e.,~\log ^{\left[ p-2\right] }\alpha ^{-1}\beta \left( r\right) & <\left[
\exp \left( \left( \log ^{\left[ q-1\right] }r\right) ^{\rho _{\alpha
}^{\left( p,q\right) }\left( \beta \right) }\right) \right] ^{\sigma
_{\alpha }^{\left( p,q\right) }\left( \beta \right) +\varepsilon }  \notag \\
i.e.,~\frac{\log ^{\left[ p-2\right] }\alpha ^{-1}\beta \left( r\right) }{%
\left[ \exp \left( \left( \log ^{\left[ q-1\right] }r\right) ^{\rho _{\alpha
}^{\left( p,q\right) }\left( \beta \right) }\right) \right] ^{k}}& <\frac{%
\left[ \exp \left( \left( \log ^{\left[ q-1\right] }r\right) ^{\rho _{\alpha
}^{\left( p,q\right) }\left( \beta \right) }\right) \right] ^{\sigma
_{\alpha }^{\left( p,q\right) }\left( \beta \right) +\varepsilon }}{\left[
\exp \left( \left( \log ^{\left[ q-1\right] }r\right) ^{\rho _{\alpha
}^{\left( p,q\right) }\left( \beta \right) }\right) \right] ^{k}}  \notag
\end{align}%
\begin{equation*}
i.e.,~\frac{\log ^{\left[ p-2\right] }\alpha ^{-1}\beta \left( r\right) }{%
\left[ \exp \left( \left( \log ^{\left[ q-1\right] }r\right) ^{\rho _{\alpha
}^{\left( p,q\right) }\left( \beta \right) }\right) \right] ^{k}}<~\ \ \ \ \
\ \ \ \ \ \ \ \ \ \ \ \ \ \ \ \ \ \ \ \ \ \ \ \ \ \ \ \ \ \ \ \ \ \
\end{equation*}%
\begin{equation*}
~\ \ \ \ \ \ \ \ \ \ \ \ \ \ \ \ \ \ \ \ \ \ \ \ \ \ \ \ \ \ \ \ \ \ \frac{1%
}{\left[ \exp \left( \left( \log ^{\left[ q-1\right] }r\right) ^{\rho
_{\alpha }^{\left( p,q\right) }\left( \beta \right) }\right) \right]
^{k-\left( \sigma _{\alpha }^{\left( p,q\right) }\left( \beta \right)
+\varepsilon \right) }}~.
\end{equation*}%
Therefore $\int\limits_{r_{0}}^{\infty }\frac{\log ^{\left[ p-2\right]
}\alpha ^{-1}\beta \left( r\right) }{\left[ \exp \left( \left( \log ^{\left[
q-1\right] }r\right) ^{\rho _{\alpha }^{\left( p,q\right) }\left( \beta
\right) }\right) \right] ^{k+1}}dr\left( r_{0}>0\right) $ converges for $%
k>\sigma _{\alpha }^{\left( p,q\right) }\left( \beta \right) .$

Again by Definition \ref{d1.4}, we obtain for a sequence values of $r$
tending to infinity that%
\begin{align}
\log ^{\left[ p-1\right] }\alpha ^{-1}\beta \left( r\right) & >\left( \sigma
_{\alpha }^{\left( p,q\right) }\left( \beta \right) -\varepsilon \right)
\left( \log ^{\left[ q-1\right] }r\right) ^{\rho _{\alpha }^{\left(
p,q\right) }\left( \beta \right) }  \notag \\
i.e.,~\log ^{\left[ p-2\right] }\alpha ^{-1}\beta \left( r\right) & >\left[
\exp \left( \left( \log ^{\left[ q-1\right] }r\right) ^{\rho _{\alpha
}^{\left( p,q\right) }\left( \beta \right) }\right) \right] ^{\sigma
_{\alpha }^{\left( p,q\right) }\left( \beta \right) -\varepsilon }~.
\label{3.4}
\end{align}%
So for $k<\sigma _{\alpha }^{\left( p,q\right) }\left( \beta \right) $, we
get from $\left( \ref{3.4}\right) $ that%
\begin{equation*}
\frac{\log ^{\left[ p-2\right] }\alpha ^{-1}\beta \left( r\right) }{\left[
\exp \left( \left( \log ^{\left[ q-1\right] }r\right) ^{\rho _{\alpha
}^{\left( p,q\right) }\left( \beta \right) }\right) \right] ^{k}}>\frac{1}{%
\left[ \exp \left( \left( \log ^{\left[ q-1\right] }r\right) ^{\rho _{\alpha
}^{\left( p,q\right) }\left( \beta \right) }\right) \right] ^{k-\left(
\sigma _{\alpha }^{\left( p,q\right) }\left( \beta \right) -\varepsilon
\right) }}~.
\end{equation*}%
Therefore $\int\limits_{r_{0}}^{\infty }\frac{\log ^{\left[ p-2\right]
}\alpha ^{-1}\beta \left( r\right) }{\left[ \exp \left( \left( \log ^{\left[
q-1\right] }r\right) ^{\rho _{\alpha }^{\left( p,q\right) }\left( \beta
\right) }\right) \right] ^{k+1}}dr\left( r_{0}>0\right) $ diverges for $%
k<\sigma _{\alpha }^{\left( p,q\right) }\left( \beta \right) $.\newline
Hence $\int\limits_{r_{0}}^{\infty }\frac{\log ^{\left[ p-2\right] }\alpha
^{-1}\beta \left( r\right) }{\left[ \exp \left( \left( \log ^{\left[ q-1%
\right] }r\right) ^{\rho _{\alpha }^{\left( p,q\right) }\left( \beta \right)
}\right) \right] ^{k+1}}dr\left( r_{0}>0\right) $ converges for $k>\sigma
_{\alpha }^{\left( p,q\right) }\left( \beta \right) $ and diverges for $%
k<\sigma _{\alpha }^{\left( p,q\right) }\left( \beta \right) $.\newline
\textbf{Subcase (B). }$\sigma _{\alpha }^{\left( p,q\right) }\left( \beta
\right) =0.$\newline

When $\sigma _{\alpha }^{\left( p,q\right) }\left( \beta \right) =0$ for any
two positive integers $p$\ and $q$ , Definition \ref{d1.4} gives for all
sufficiently large values of $r$ that%
\begin{equation*}
\frac{\log ^{\left[ p-1\right] }\alpha ^{-1}\beta \left( r\right) }{\left(
\log ^{\left[ q-1\right] }r\right) ^{\rho _{\alpha }^{\left( p,q\right)
}\left( \beta \right) }}<\varepsilon ~.
\end{equation*}%
Then as before we obtain that $\int\limits_{r_{0}}^{\infty }\frac{\log ^{%
\left[ p-2\right] }\alpha ^{-1}\beta \left( r\right) }{\left[ \exp \left(
\left( \log ^{\left[ q-1\right] }r\right) ^{\rho _{\alpha }^{\left(
p,q\right) }\left( \beta \right) }\right) \right] ^{k+1}}dr\left(
r_{0}>0\right) $ converges for $k>0$ and diverges for $k<0$.

Thus combining Subcase $\left( A\right) $ and Subcase $\left( B\right) $,
Definition \ref{d1.5} follows.\newline
\textbf{Definition \ref{d1.5} }$\Rightarrow$\textbf{\ Definition \ref{d1.4}.}%
\newline

From Definition \textbf{\ref{d1.5}} and for arbitrary positive $\varepsilon $
the integral \linebreak $\int\limits_{r_{0}}^{\infty }\frac{\log ^{\left[ p-2%
\right] }\alpha ^{-1}\beta \left( r\right) }{\left[ \exp \left( \left( \log
^{\left[ q-1\right] }r\right) ^{\rho _{\alpha }^{\left( p,q\right) }\left(
\beta \right) }\right) \right] ^{\sigma _{\alpha }^{\left( p,q\right)
}\left( \beta \right) +\varepsilon +1}}dr\left( r_{0}>0\right) $ converges.
Then by Lemma \ref{l1.1}, we get that%
\begin{equation*}
\underset{r\rightarrow \infty }{\lim \sup }\frac{\log ^{\left[ p-2\right]
}\alpha ^{-1}\beta \left( r\right) }{\left[ \exp \left( \left( \log ^{\left[
q-1\right] }r\right) ^{\rho _{\alpha }^{\left( p,q\right) }\left( \beta
\right) }\right) \right] ^{\sigma _{\alpha }^{\left( p,q\right) }\left(
\beta \right) +\varepsilon }}=0~.
\end{equation*}%
So we obtain all sufficiently large values of $r$ that%
\begin{align*}
\frac{\log ^{\left[ p-2\right] }\alpha ^{-1}\beta \left( r\right) }{\left[
\exp \left( \left( \log ^{\left[ q-1\right] }r\right) ^{\rho _{\alpha
}^{\left( p,q\right) }\left( \beta \right) }\right) \right] ^{\sigma
_{\alpha }^{\left( p,q\right) }\left( \beta \right) +\varepsilon }}&
<\varepsilon  \\
i.e.,~\log ^{\left[ p-2\right] }\alpha ^{-1}\beta \left( r\right) &
<\varepsilon \cdot \left[ \exp \left( \left( \log ^{\left[ q-1\right]
}r\right) ^{\rho _{\alpha }^{\left( p,q\right) }\left( \beta \right)
}\right) \right] ^{\sigma _{\alpha }^{\left( p,q\right) }\left( \beta
\right) +\varepsilon } \\
i.e.,~\log ^{\left[ p-1\right] }\alpha ^{-1}\beta \left( r\right) & <\log
\varepsilon +\left( \sigma _{\alpha }^{\left( p,q\right) }\left( \beta
\right) +\varepsilon \right) \left( \log ^{\left[ q-1\right] }r\right)
^{\rho _{\alpha }^{\left( p,q\right) }\left( \beta \right) } \\
i.e.,~\underset{r\rightarrow \infty }{\lim \sup }\frac{\log ^{\left[ p-1%
\right] }\alpha ^{-1}\beta \left( r\right) }{\left( \log ^{\left[ q-1\right]
}r\right) ^{\rho _{\alpha }^{\left( p,q\right) }\left( \beta \right) }}&
\leq \sigma _{\alpha }^{\left( p,q\right) }\left( \beta \right) +\varepsilon
~.
\end{align*}%
Since $\varepsilon \left( >0\right) $ is arbitrary, it follows from above
that%
\begin{equation}
\underset{r\rightarrow \infty }{\lim \sup }\frac{\log ^{\left[ p-1\right]
}\alpha ^{-1}\beta \left( r\right) }{\left( \log ^{\left[ q-1\right]
}r\right) ^{\rho _{\alpha }^{\left( p,q\right) }\left( \beta \right) }}\leq
\sigma _{\alpha }^{\left( p,q\right) }\left( \beta \right) ~.  \label{3.5}
\end{equation}%
On the other hand the divergence of the integral $\int\limits_{r_{0}}^{%
\infty }\frac{\log ^{\left[ p-2\right] }\alpha ^{-1}\beta \left( r\right) }{%
\left[ \exp \left( \left( \log ^{\left[ q-1\right] }r\right) ^{\rho _{\alpha
}^{\left( p,q\right) }\left( \beta \right) }\right) \right] ^{\sigma
_{\alpha }^{\left( p,q\right) }\left( \beta \right) -\varepsilon +1}}%
dr\left( r_{0}>0\right) $ implies that there exists a sequence of values of $%
r$ tending to infinity such that%
\begin{equation*}
\frac{\log ^{\left[ p-2\right] }\alpha ^{-1}\beta \left( r\right) }{\left[
\exp \left( \left( \log ^{\left[ q-1\right] }r\right) ^{\rho _{\alpha
}^{\left( p,q\right) }\left( \beta \right) }\right) \right] ^{\sigma
_{\alpha }^{\left( p,q\right) }\left( \beta \right) -\varepsilon +1}}>\frac{1%
}{\left[ \exp \left( \left( \log ^{\left[ q-1\right] }r\right) ^{\rho
_{\alpha }^{\left( p,q\right) }\left( \beta \right) }\right) \right]
^{1+\varepsilon }}
\end{equation*}%
\begin{align*}
i.e.,~\log ^{\left[ p-2\right] }\alpha ^{-1}\beta \left( r\right) & >\left[
\exp \left( \left( \log ^{\left[ q-1\right] }r\right) ^{\rho _{\alpha
}^{\left( p,q\right) }\left( \beta \right) }\right) \right] ^{\sigma
_{\alpha }^{\left( p,q\right) }\left( \beta \right) -2\varepsilon } \\
i.e.,~\log ^{\left[ p-1\right] }\alpha ^{-1}\beta \left( r\right) & >\left(
\sigma _{\alpha }^{\left( p,q\right) }\left( \beta \right) -2\varepsilon
\right) \left( \left( \log ^{\left[ q-1\right] }r\right) ^{\rho _{\alpha
}^{\left( p,q\right) }\left( \beta \right) }\right)  \\
i.e.,~\frac{\log ^{\left[ p-1\right] }\alpha ^{-1}\beta \left( r\right) }{%
\left( \log ^{\left[ q-1\right] }r\right) ^{\rho _{\alpha }^{\left(
p,q\right) }\left( \beta \right) }}& >\left( \sigma _{\alpha }^{\left(
p,q\right) }\left( \beta \right) -2\varepsilon \right) ~.
\end{align*}%
As $\varepsilon \left( >0\right) $ is arbitrary, it follows from above that%
\begin{equation}
\underset{r\rightarrow \infty }{\lim \sup }\frac{\log ^{\left[ p-1\right]
}\alpha ^{-1}\beta \left( r\right) }{\left( \log ^{\left[ q-1\right]
}r\right) ^{\rho _{\alpha }^{\left( p,q\right) }\left( \beta \right) }}\geq
\sigma _{\alpha }^{\left( p,q\right) }\left( \beta \right) ~.  \label{3.6}
\end{equation}%
So from $\left( \ref{3.5}\right) $ and $\left( \ref{3.6}\right) $ , we
obtain that%
\begin{equation*}
\underset{r\rightarrow \infty }{\lim \sup }\frac{\log ^{\left[ p-1\right]
}\alpha ^{-1}\beta \left( r\right) }{\left( \log ^{\left[ q-1\right]
}r\right) ^{\rho _{\alpha }^{\left( p,q\right) }\left( \beta \right) }}%
=\sigma _{\alpha }^{\left( p,q\right) }\left( \beta \right) ~.
\end{equation*}%
\newline
This proves the theorem.
\end{proof}

\begin{theorem}
\label{t3.2} Let $\alpha \left( x\right) $ and $\beta \left( x\right) $ be
any two positive continuous increasing to $+\infty $ on $[x_{0},+\infty )$
functions having finite positive \emph{relative }$\left( p,q\right) $\emph{\
-th lower order} $\lambda _{\alpha }^{\left( p,q\right) }\left( \beta
\right) $ $\left( 0<\lambda _{\alpha }^{\left( p,q\right) }\left( \beta
\right) <\infty \right) $ and \emph{relative }$\left( p,q\right) $\emph{\
-th weak type} $\tau _{\alpha }^{\left( p,q\right) }\left( \beta \right) $
where $p$\ and $q$ are any two positive integers. Then Definition \ref{d1.6}
and Definition \ref{d1.7} are equivalent.
\end{theorem}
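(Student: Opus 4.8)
The plan is to imitate the proof of Theorem \ref{t3.1} line for line, making three systematic replacements throughout: the relative $\left(p,q\right)$-th order $\rho_\alpha^{\left(p,q\right)}\left(\beta\right)$ is replaced by the relative $\left(p,q\right)$-th lower order $\lambda_\alpha^{\left(p,q\right)}\left(\beta\right)$, the relative $\left(p,q\right)$-th type $\sigma_\alpha^{\left(p,q\right)}\left(\beta\right)$ by the relative $\left(p,q\right)$-th weak type $\tau_\alpha^{\left(p,q\right)}\left(\beta\right)$, and each outer $\limsup$ by a $\liminf$. I would keep the same case analysis: Case I, $\tau_\alpha^{\left(p,q\right)}\left(\beta\right)=\infty$, and Case II, $0\le\tau_\alpha^{\left(p,q\right)}\left(\beta\right)<\infty$ (with Subcase (A) $0<\tau_\alpha^{\left(p,q\right)}\left(\beta\right)<\infty$ and Subcase (B) $\tau_\alpha^{\left(p,q\right)}\left(\beta\right)=0$), proving in each the two implications Definition \ref{d1.6} $\Rightarrow$ Definition \ref{d1.7} and Definition \ref{d1.7} $\Rightarrow$ Definition \ref{d1.6}. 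The only external tool is Lemma \ref{l1.1}, now applied with $A=\lambda_\alpha^{\left(p,q\right)}\left(\beta\right)$ in exactly the positions where Lemma \ref{l1.1} with $A=\rho_\alpha^{\left(p,q\right)}\left(\beta\right)$ was used in Theorem \ref{t3.1}, namely to convert convergence of the integral with exponent $k+1$ into the assertion that the corresponding quotient with exponent $k$ tends to $0$.

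The essential difference from Theorem \ref{t3.1}, which I would isolate at the outset, is that passing from $\limsup$ to $\liminf$ interchanges the two quantifier patterns driving every estimate. From $\sigma_\alpha^{\left(p,q\right)}\left(\beta\right)=\limsup$ one reads off the upper bound $\log^{\left[p-1\right]}\alpha^{-1}\beta\left(r\right)<\left(\sigma_\alpha^{\left(p,q\right)}\left(\beta\right)+\varepsilon\right)\left(\log^{\left[q-1\right]}r\right)^{\rho_\alpha^{\left(p,q\right)}\left(\beta\right)}$ for all large $r$, together with a lower bound holding only along a sequence $r\to\infty$. From $\tau_\alpha^{\left(p,q\right)}\left(\beta\right)=\liminf$ one instead obtains the lower bound $\log^{\left[p-1\right]}\alpha^{-1}\beta\left(r\right)>\left(\tau_\alpha^{\left(p,q\right)}\left(\beta\right)-\varepsilon\right)\left(\log^{\left[q-1\right]}r\right)^{\lambda_\alpha^{\left(p,q\right)}\left(\beta\right)}$ for all large $r$, the matching upper bound now holding only along a sequence. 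Consequently the two halves of Definition \ref{d1.6} $\Rightarrow$ Definition \ref{d1.7} exchange their sources: divergence for $k<\tau_\alpha^{\left(p,q\right)}\left(\beta\right)$ is now driven by the eventual lower bound, while convergence for $k>\tau_\alpha^{\left(p,q\right)}\left(\beta\right)$ is the part fed by the sequential estimate. The contradiction argument of Case I then goes through as before once one records that for $\liminf=\infty$ the lower bound in fact holds for all large $r$.

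For the reverse implication Definition \ref{d1.7} $\Rightarrow$ Definition \ref{d1.6} I would produce the two inequalities that pin down $\tau_\alpha^{\left(p,q\right)}\left(\beta\right)$, in analogy with $\left(\ref{3.5}\right)$ and $\left(\ref{3.6}\right)$. Convergence of the integral with exponent $\tau_\alpha^{\left(p,q\right)}\left(\beta\right)+\varepsilon+1$, combined with Lemma \ref{l1.1}, yields an eventual upper bound on $\log^{\left[p-1\right]}\alpha^{-1}\beta\left(r\right)$ and hence $\liminf\le\tau_\alpha^{\left(p,q\right)}\left(\beta\right)$ for the defining quotient; divergence of the integral with exponent $\tau_\alpha^{\left(p,q\right)}\left(\beta\right)-\varepsilon+1$ is then used to force the reverse inequality $\liminf\ge\tau_\alpha^{\left(p,q\right)}\left(\beta\right)$, and the two together give the required limit.

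I expect this final step to be the main obstacle. In Theorem \ref{t3.1} the divergence of the integral only produces a lower estimate along a sequence, which is precisely what a $\limsup$ requires; but to obtain $\liminf\ge\tau_\alpha^{\left(p,q\right)}\left(\beta\right)$ one needs a lower estimate valid for all large $r$, not merely along a sequence. The delicate point is therefore to upgrade the sequential consequence of divergence to an eventual one, which I would attempt by combining Lemma \ref{l1.1} with the monotonicity of $\log^{\left[p-2\right]}\alpha^{-1}\beta\left(r\right)$ (the same monotonicity exploited inside the proof of Lemma \ref{l1.1}). Getting this interchange of quantifiers right, rather than transcribing Theorem \ref{t3.1} verbatim, is where the argument must be carried out with attention.
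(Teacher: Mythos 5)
Your plan is, in structure, exactly the paper's proof: the printed argument for Theorem \ref{t3.2} is a transcription of the proof of Theorem \ref{t3.1} with $\rho _{\alpha }^{\left( p,q\right) }\left( \beta \right) $ replaced by $\lambda _{\alpha }^{\left( p,q\right) }\left( \beta \right) $, $\sigma _{\alpha }^{\left( p,q\right) }\left( \beta \right) $ by $\tau _{\alpha }^{\left( p,q\right) }\left( \beta \right) $, $\lim \sup $ by $\lim \inf $, the same Case I / Case II / two-subcase decomposition, Lemma \ref{l1.1} invoked with $A=\lambda _{\alpha }^{\left( p,q\right) }\left( \beta \right) $ in the same positions, and the phrases ``for all sufficiently large values of $r$'' and ``for a sequence of values of $r$ tending to infinity'' interchanged in precisely the places you predict.

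The one point where you part company with the paper is the step you single out as the main obstacle, and there your instinct is sounder than the paper's execution. To obtain the analogue of $\left( \ref{3.6}\right) $, namely $\left( \ref{33.6}\right) $, the paper simply asserts that divergence of the integral with exponent $\tau _{\alpha }^{\left( p,q\right) }\left( \beta \right) -\varepsilon +1$ ``implies for all sufficiently large values of $r$'' the pointwise lower bound on the integrand --- with no justification. As you note, divergence by itself only forces that bound along a sequence of $r$ (which is all a $\lim \sup $ needs, as in Theorem \ref{t3.1}); the upgrade to an eventual bound is exactly what requires proof here, and the monotonicity argument you propose to supply is absent from the paper. Be aware that the dual difficulty also arises in the half you describe without flagging it: in Subcase (C) the convergence of the integral for $k>\tau _{\alpha }^{\left( p,q\right) }\left( \beta \right) $ is deduced from an upper bound on $\log ^{\left[ p-1\right] }\alpha ^{-1}\beta \left( r\right) $ that Definition \ref{d1.6} only guarantees along a sequence, which likewise does not dominate the integrand for all large $r$ without an extra comparison/monotonicity step. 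So: same route as the paper, and the two quantifier upgrades you would have to carry out to make it rigorous are left unproved in the paper as well.
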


\begin{proof}
Let us consider $\alpha \left( x\right) $ and $\beta \left( x\right) $ be
any two positive continuous increasing to $+\infty $ on $[x_{0},+\infty )$
functions such that $\lambda _{\alpha }^{\left( p,q\right) }\left( \beta
\right) $ $\left( 0<\lambda _{\alpha }^{\left( p,q\right) }\left( \beta
\right) <\infty \right) $ exists for any two positive integers $p$\ and $q$.%
\newline
\textbf{Case I.\ }$\tau _{\alpha }^{\left( p,q\right) }\left( \beta \right)
=\infty .$\newline
\textbf{Definition \ref{d1.6} }$\Rightarrow $\textbf{\ Definition \ref{d1.7}.%
}\newline
\qquad As $\tau _{\alpha }^{\left( p,q\right) }\left( \beta \right) =\infty $%
, from Definition \ref{d1.6} we obtain for arbitrary positive $G$ and for
all sufficiently large values of $r$ that%
\begin{align}
\log ^{\left[ p-1\right] }\alpha ^{-1}\beta \left( r\right) & >G\cdot \left(
\log ^{\left[ q-1\right] }r\right) ^{\lambda _{\alpha }^{\left( p,q\right)
}\left( \beta \right) }  \notag \\
i.e.,~\log ^{\left[ p-2\right] }\alpha ^{-1}\beta \left( r\right) & >\left[
\exp \left( \left( \log ^{\left[ q-1\right] }r\right) ^{\lambda _{\alpha
}^{\left( p,q\right) }\left( \beta \right) }\right) \right] ^{G}~.
\label{33.1}
\end{align}%
Now if possible let the integral $\int\limits_{r_{0}}^{\infty }\frac{\log ^{%
\left[ p-2\right] }\alpha ^{-1}\beta \left( r\right) }{\left[ \exp \left(
\left( \log ^{\left[ q-1\right] }r\right) ^{\lambda _{\alpha }^{\left(
p,q\right) }\left( \beta \right) }\right) \right] ^{G+1}}dr$ $\left(
r_{0}>0\right) $ be converge.\newline
Then by Lemma \ref{l1.1},%
\begin{equation*}
\underset{r\rightarrow \infty }{\lim \inf }\frac{\log ^{\left[ p-2\right]
}\alpha ^{-1}\beta \left( r\right) }{\left[ \exp \left( \left( \log ^{\left[
q-1\right] }r\right) ^{\lambda _{\alpha }^{\left( p,q\right) }\left( \beta
\right) }\right) \right] ^{G}}=0~.
\end{equation*}%
So for a sequence of values of $r$ tending to infinity we get that%
\begin{equation}
\log ^{\left[ p-2\right] }\alpha ^{-1}\beta \left( r\right) <\left[ \exp
\left( \left( \log ^{\left[ q-1\right] }r\right) ^{\lambda _{\alpha
}^{\left( p,q\right) }\left( \beta \right) }\right) \right] ^{G}~.
\label{33.2}
\end{equation}%
Therefore from $\left( \ref{33.1}\right) $ and $\left( \ref{33.2}\right) $,
we arrive at a contradiction.

Hence $\int\limits_{r_{0}}^{\infty }\frac{\log ^{\left[ p-2\right] }\alpha
^{-1}\beta \left( r\right) }{\left[ \exp \left( \left( \log ^{\left[ q-1%
\right] }r\right) ^{\lambda _{\alpha }^{\left( p,q\right) }\left( \beta
\right) }\right) \right] ^{G+1}}dr$ $\left( r_{0}>0\right) $ diverges
whenever $G$ is finite, which is the Definition \ref{d1.7}.\newline
\textbf{Definition \ref{d1.7} }$\Rightarrow $\textbf{\ Definition \ref{d1.6}.%
}\newline
\qquad Let $G$ be any positive number. Since $\tau _{\alpha }^{\left(
p,q\right) }\left( \beta \right) =\infty $, from Definition \ref{d1.7}, the
divergence of the integral $\int\limits_{r_{0}}^{\infty }\frac{\log ^{\left[
p-2\right] }\alpha ^{-1}\beta \left( r\right) }{\left[ \exp \left( \left(
\log ^{\left[ q-1\right] }r\right) ^{\lambda _{\alpha }^{\left( p,q\right)
}\left( \beta \right) }\right) \right] ^{G+1}}dr$ $\left( r_{0}>0\right) $
gives for arbitrary positive $\varepsilon $ and for all sufficiently large
values of $r$ that%
\begin{align*}
\log ^{\left[ p-2\right] }\alpha ^{-1}\beta \left( r\right) & >\left[ \exp
\left( \left( \log ^{\left[ q-1\right] }r\right) ^{\lambda _{\alpha
}^{\left( p,q\right) }\left( \beta \right) }\right) \right] ^{G-\varepsilon }
\\
i.e.,~\log ^{\left[ p-1\right] }\alpha ^{-1}\beta \left( r\right) & >\left(
G-\varepsilon \right) \left( \log ^{\left[ q-1\right] }r\right) ^{\lambda
_{\alpha }^{\left( p,q\right) }\left( \beta \right) },
\end{align*}%
which implies that%
\begin{equation*}
\underset{r\rightarrow \infty }{\lim \inf }\frac{\log ^{\left[ p-1\right]
}\alpha ^{-1}\beta \left( r\right) }{\left( \log ^{\left[ q-1\right]
}r\right) ^{\lambda _{\alpha }^{\left( p,q\right) }\left( \beta \right) }}%
\geq G-\varepsilon ~.
\end{equation*}%
Since $G>0$ is arbitrary, it follows that%
\begin{equation*}
\underset{r\rightarrow \infty }{\lim \inf }\frac{\log ^{\left[ p-1\right]
}\alpha ^{-1}\beta \left( r\right) }{\left( \log ^{\left[ q-1\right]
}r\right) ^{\lambda _{\alpha }^{\left( p,q\right) }\left( \beta \right) }}%
=\infty ~.
\end{equation*}%
Thus Definition \ref{d1.6} follows.\newline
\textbf{Case II.\ }$0\leq \tau _{\alpha }^{\left( p,q\right) }\left( \beta
\right) <\infty .$\newline
\textbf{Definition \ref{d1.6} }$\Rightarrow $\textbf{\ Definition \ref{d1.7}.%
}\newline
\textbf{Subcase (C).$\mathbb{\ }$\ }$0<\tau _{\alpha }^{\left( p,q\right)
}\left( \beta \right) <\infty .$\newline

Let $\alpha \left( x\right) $ and $\beta \left( x\right) $ be any two
positive continuous increasing to $+\infty $ on $[x_{0},+\infty )$ functions
such that $0<\tau _{\alpha }^{\left( p,q\right) }\left( \beta \right)
<\infty $ exists for any two positive integers $p$\ and $q.$ Then according
to the Definition \ref{d.16}, for a sequence of values of $r$ tending to
infinity we get that%
\begin{align}
\log ^{\left[ p-1\right] }\alpha ^{-1}\beta \left( r\right) & <\left( \tau
_{\alpha }^{\left( p,q\right) }\left( \beta \right) +\varepsilon \right)
\left( \log ^{\left[ q-1\right] }r\right) ^{\lambda _{\alpha }^{\left(
p,q\right) }\left( \beta \right) }  \notag \\
i.e.,~\log ^{\left[ p-2\right] }\alpha ^{-1}\beta \left( r\right) & <\left[
\exp \left( \left( \log ^{\left[ q-1\right] }r\right) ^{\lambda _{\alpha
}^{\left( p,q\right) }\left( \beta \right) }\right) \right] ^{\tau _{\alpha
}^{\left( p,q\right) }\left( \beta \right) +\varepsilon }  \notag \\
i.e.,~\frac{\log ^{\left[ p-2\right] }\alpha ^{-1}\beta \left( r\right) }{%
\left[ \exp \left( \left( \log ^{\left[ q-1\right] }r\right) ^{\lambda
_{\alpha }^{\left( p,q\right) }\left( \beta \right) }\right) \right] ^{k}}& <%
\frac{\left[ \exp \left( \left( \log ^{\left[ q-1\right] }r\right) ^{\lambda
_{\alpha }^{\left( p,q\right) }\left( \beta \right) }\right) \right] ^{\tau
_{\alpha }^{\left( p,q\right) }\left( \beta \right) +\varepsilon }}{\left[
\exp \left( \left( \log ^{\left[ q-1\right] }r\right) ^{\lambda _{\alpha
}^{\left( p,q\right) }\left( \beta \right) }\right) \right] ^{k}}  \notag
\end{align}%
\begin{equation*}
i.e.,~\frac{\log ^{\left[ p-2\right] }\alpha ^{-1}\beta \left( r\right) }{%
\left[ \exp \left( \left( \log ^{\left[ q-1\right] }r\right) ^{\lambda
_{\alpha }^{\left( p,q\right) }\left( \beta \right) }\right) \right] ^{k}}<%
\frac{1}{\left[ \exp \left( \left( \log ^{\left[ q-1\right] }r\right)
^{\lambda _{\alpha }^{\left( p,q\right) }\left( \beta \right) }\right) %
\right] ^{k-\left( \tau _{\alpha }^{\left( p,q\right) }\left( \beta \right)
+\varepsilon \right) }}~.
\end{equation*}%
Therefore $\int\limits_{r_{0}}^{\infty }\frac{\log ^{\left[ p-2\right]
}\alpha ^{-1}\beta \left( r\right) }{\left[ \exp \left( \left( \log ^{\left[
q-1\right] }r\right) ^{\lambda _{\alpha }^{\left( p,q\right) }\left( \beta
\right) }\right) \right] ^{k+1}}dr\left( r_{0}>0\right) $ converges for $%
k>\tau _{\alpha }^{\left( p,q\right) }\left( \beta \right) .$

Again by Definition \ref{d1.6}, we obtain for all sufficiently large values
of $r$ that%
\begin{align}
\log ^{\left[ p-1\right] }\alpha ^{-1}\beta \left( r\right) & >\left( \tau
_{\alpha }^{\left( p,q\right) }\left( \beta \right) -\varepsilon \right)
\left( \log ^{\left[ q-1\right] }r\right) ^{\lambda _{\alpha }^{\left(
p,q\right) }\left( \beta \right) }  \notag \\
i.e.,~\log ^{\left[ p-2\right] }\alpha ^{-1}\beta \left( r\right) & >\left[
\exp \left( \left( \log ^{\left[ q-1\right] }r\right) ^{\lambda _{\alpha
}^{\left( p,q\right) }\left( \beta \right) }\right) \right] ^{\tau _{\alpha
}^{\left( p,q\right) }\left( \beta \right) -\varepsilon }~.  \label{33.4}
\end{align}%
So for $k<\tau _{\alpha }^{\left( p,q\right) }\left( \beta \right) $, we get
from $\left( \ref{33.4}\right) $ that%
\begin{equation*}
\frac{\log ^{\left[ p-2\right] }\alpha ^{-1}\beta \left( r\right) }{\left[
\exp \left( \left( \log ^{\left[ q-1\right] }r\right) ^{\lambda _{\alpha
}^{\left( p,q\right) }\left( \beta \right) }\right) \right] ^{k}}>\frac{1}{%
\left[ \exp \left( \left( \log ^{\left[ q-1\right] }r\right) ^{\lambda
_{\alpha }^{\left( p,q\right) }\left( \beta \right) }\right) \right]
^{k-\left( \tau _{\alpha }^{\left( p,q\right) }\left( \beta \right)
-\varepsilon \right) }}~.
\end{equation*}%
Therefore $\int\limits_{r_{0}}^{\infty }\frac{\log ^{\left[ p-2\right]
}\alpha ^{-1}\beta \left( r\right) }{\left[ \exp \left( \left( \log ^{\left[
q-1\right] }r\right) ^{\lambda _{\alpha }^{\left( p,q\right) }\left( \beta
\right) }\right) \right] ^{k+1}}dr\left( r_{0}>0\right) $ diverges for $%
k<\tau _{\alpha }^{\left( p,q\right) }\left( \beta \right) $.\newline
Hence $\int\limits_{r_{0}}^{\infty }\frac{\log ^{\left[ p-2\right] }\alpha
^{-1}\beta \left( r\right) }{\left[ \exp \left( \left( \log ^{\left[ q-1%
\right] }r\right) ^{\lambda _{\alpha }^{\left( p,q\right) }\left( \beta
\right) }\right) \right] ^{k+1}}dr\left( r_{0}>0\right) $ converges for $%
k>\tau _{\alpha }^{\left( p,q\right) }\left( \beta \right) $ and diverges
for $k<\tau _{\alpha }^{\left( p,q\right) }\left( \beta \right) $.\newline
\textbf{Subcase (D). }$\tau _{\alpha }^{\left( p,q\right) }\left( \beta
\right) =0.$\newline

When $\tau _{\alpha }^{\left( p,q\right) }\left( \beta \right) =0$ for any
two positive integers $p$\ and $q$ , Definition \ref{d1.6} gives for a
sequence of values of $r$ tending to infinity that%
\begin{equation*}
\frac{\log ^{\left[ p-1\right] }\alpha ^{-1}\beta \left( r\right) }{\left(
\log ^{\left[ q-1\right] }r\right) ^{\lambda _{\alpha }^{\left( p,q\right)
}\left( \beta \right) }}<\varepsilon ~.
\end{equation*}%
Then as before we obtain that $\int\limits_{r_{0}}^{\infty }\frac{\log ^{%
\left[ p-2\right] }\alpha ^{-1}\beta \left( r\right) }{\left[ \exp \left(
\left( \log ^{\left[ q-1\right] }r\right) ^{\lambda _{\alpha }^{\left(
p,q\right) }\left( \beta \right) }\right) \right] ^{k+1}}dr\left(
r_{0}>0\right) $ converges for $k>0$ and diverges for $k<0$.

Thus combining Subcase $\left( C\right) $ and Subcase $\left( D\right) $,
Definition \ref{d1.7} follows.\newline
\textbf{Definition \ref{d1.7} }$\Rightarrow $\textbf{\ Definition \ref{d1.6}.%
} 

From Definition \textbf{\ref{d1.7}} and for arbitrary positive $\varepsilon $
the integral \linebreak $\int\limits_{r_{0}}^{\infty }\frac{\log ^{\left[ p-2%
\right] }\alpha ^{-1}\beta \left( r\right) }{\left[ \exp \left( \left( \log
^{\left[ q-1\right] }r\right) ^{\lambda _{\alpha }^{\left( p,q\right)
}\left( \beta \right) }\right) \right] ^{\tau _{\alpha }^{\left( p,q\right)
}\left( \beta \right) +\varepsilon +1}}dr\left( r_{0}>0\right) $ converges.
Then by Lemma \ref{l1.1}, we get that%
\begin{equation*}
\underset{r\rightarrow \infty }{\lim \inf }\frac{\log ^{\left[ p-2\right]
}\alpha ^{-1}\beta \left( r\right) }{\left[ \exp \left( \left( \log ^{\left[
q-1\right] }r\right) ^{\lambda _{\alpha }^{\left( p,q\right) }\left( \beta
\right) }\right) \right] ^{\tau _{\alpha }^{\left( p,q\right) }\left( \beta
\right) +\varepsilon }}=0~.
\end{equation*}%
So we get for a sequence of values of $r$ tending to infinity that%
\begin{align*}
\frac{\log ^{\left[ p-2\right] }\alpha ^{-1}\beta \left( r\right) }{\left[
\exp \left( \left( \log ^{\left[ q-1\right] }r\right) ^{\lambda _{\alpha
}^{\left( p,q\right) }\left( \beta \right) }\right) \right] ^{\tau _{\alpha
}^{\left( p,q\right) }\left( \beta \right) +\varepsilon }}& <\varepsilon  \\
i.e.,~\log ^{\left[ p-2\right] }\alpha ^{-1}\beta \left( r\right) &
<\varepsilon \cdot \left[ \exp \left( \left( \log ^{\left[ q-1\right]
}r\right) ^{\lambda _{\alpha }^{\left( p,q\right) }\left( \beta \right)
}\right) \right] ^{\tau _{\alpha }^{\left( p,q\right) }\left( \beta \right)
+\varepsilon } \\
i.e.,~\log ^{\left[ p-1\right] }\alpha ^{-1}\beta \left( r\right) & <\log
\varepsilon +\left( \tau _{\alpha }^{\left( p,q\right) }\left( \beta \right)
+\varepsilon \right) \left( \log ^{\left[ q-1\right] }r\right) ^{\lambda
_{\alpha }^{\left( p,q\right) }\left( \beta \right) } \\
i.e.,~\underset{r\rightarrow \infty }{\lim \inf }\frac{\log ^{\left[ p-1%
\right] }\alpha ^{-1}\beta \left( r\right) }{\left( \log ^{\left[ q-1\right]
}r\right) ^{\lambda _{\alpha }^{\left( p,q\right) }\left( \beta \right) }}&
\leq \tau _{\alpha }^{\left( p,q\right) }\left( \beta \right) +\varepsilon ~.
\end{align*}%
Since $\varepsilon \left( >0\right) $ is arbitrary, it follows from above
that%
\begin{equation}
\underset{r\rightarrow \infty }{\lim \inf }\frac{\log ^{\left[ p-1\right]
}\alpha ^{-1}\beta \left( r\right) }{\left( \log ^{\left[ q-1\right]
}r\right) ^{\lambda _{\alpha }^{\left( p,q\right) }\left( \beta \right) }}%
\leq \tau _{\alpha }^{\left( p,q\right) }\left( \beta \right) ~.
\label{33.5}
\end{equation}%
On the other hand the divergence of the integral $\int\limits_{r_{0}}^{%
\infty }\frac{\log ^{\left[ p-2\right] }\alpha ^{-1}\beta \left( r\right) }{%
\left[ \exp \left( \left( \log ^{\left[ q-1\right] }r\right) ^{\lambda
_{\alpha }^{\left( p,q\right) }\left( \beta \right) }\right) \right] ^{\tau
_{\alpha }^{\left( p,q\right) }\left( \beta \right) -\varepsilon +1}}%
dr\left( r_{0}>0\right) $ implies for all sufficiently large values of $r$
that%
\begin{equation*}
\frac{\log ^{\left[ p-2\right] }\alpha ^{-1}\beta \left( r\right) }{\left[
\exp \left( \left( \log ^{\left[ q-1\right] }r\right) ^{\lambda _{\alpha
}^{\left( p,q\right) }\left( \beta \right) }\right) \right] ^{\tau _{\alpha
}^{\left( p,q\right) }\left( \beta \right) -\varepsilon +1}}>\frac{1}{\left[
\exp \left( \left( \log ^{\left[ q-1\right] }r\right) ^{\lambda _{\alpha
}^{\left( p,q\right) }\left( \beta \right) }\right) \right] ^{1+\varepsilon }%
}
\end{equation*}%
\begin{align*}
i.e.,~\log ^{\left[ p-2\right] }\alpha ^{-1}\beta \left( r\right) & >\left[
\exp \left( \left( \log ^{\left[ q-1\right] }r\right) ^{\lambda _{\alpha
}^{\left( p,q\right) }\left( \beta \right) }\right) \right] ^{\tau _{\alpha
}^{\left( p,q\right) }\left( \beta \right) -2\varepsilon } \\
i.e.,~\log ^{\left[ p-1\right] }\alpha ^{-1}\beta \left( r\right) & >\left(
\tau _{\alpha }^{\left( p,q\right) }\left( \beta \right) -2\varepsilon
\right) \left( \left( \log ^{\left[ q-1\right] }r\right) ^{\lambda _{\alpha
}^{\left( p,q\right) }\left( \beta \right) }\right)  \\
i.e.,~\frac{\log ^{\left[ p-1\right] }\alpha ^{-1}\beta \left( r\right) }{%
\left( \log ^{\left[ q-1\right] }r\right) ^{\lambda _{\alpha }^{\left(
p,q\right) }\left( \beta \right) }}& >\left( \tau _{\alpha }^{\left(
p,q\right) }\left( \beta \right) -2\varepsilon \right) ~.
\end{align*}%
As $\varepsilon \left( >0\right) $ is arbitrary, it follows from above that%
\begin{equation}
\underset{r\rightarrow \infty }{\lim \inf }\frac{\log ^{\left[ p-1\right]
}\alpha ^{-1}\beta \left( r\right) }{\left( \log ^{\left[ q-1\right]
}r\right) ^{\lambda _{\alpha }^{\left( p,q\right) }\left( \beta \right) }}%
\geq \tau _{\alpha }^{\left( p,q\right) }\left( \beta \right) ~.
\label{33.6}
\end{equation}%
So from $\left( \ref{33.5}\right) $ and $\left( \ref{33.6}\right) $ we
obtain that%
\begin{equation*}
\underset{r\rightarrow \infty }{\lim \inf }\frac{\log ^{\left[ p-1\right]
}\alpha ^{-1}\beta \left( r\right) }{\left( \log ^{\left[ q-1\right]
}r\right) ^{\lambda _{\alpha }^{\left( p,q\right) }\left( \beta \right) }}%
=\tau _{\alpha }^{\left( p,q\right) }\left( \beta \right) ~.
\end{equation*}%
This proves the theorem.
\end{proof}

\qquad Next we introduce the following two relative growth indicators which
will also enable help our subsequent study.

\begin{definition}
\label{d1.8} Let $\alpha \left( x\right) $ and $\beta \left( x\right) $ be
any two positive continuous increasing to $+\infty $ on $[x_{0},+\infty )$
functions having finite positive relative $\left( p,q\right) $ th order $%
\rho _{\alpha }^{\left( p,q\right) }\left( \beta \right) $ \linebreak $%
\left( a<\rho _{\alpha }^{\left( p,q\right) }\left( \beta \right) <\infty
\right) $ where $p$\ and $q$ are any two positive integers. Then the \emph{%
relative }$\left( p,q\right) $\emph{-th lower type} of $\beta \left(
x\right) $ with respect to $\alpha \left( x\right) $ is defined as :%
\begin{equation*}
\overline{\sigma }_{\alpha }^{\left( p,q\right) }\left( \beta \right) =%
\underset{r\rightarrow \infty }{\lim \inf }\frac{\log ^{\left[ p-1\right]
}\alpha ^{-1}\beta \left( r\right) }{\left( \log ^{\left[ q-1\right]
}r\right) ^{\rho _{\alpha }^{\left( p,q\right) }\left( \beta \right) }}~.
\end{equation*}
\end{definition}

\qquad The above definition can alternatively be defined in the following
manner:

\begin{definition}
\label{d1.9} Let $\alpha \left( x\right) $ and $\beta \left( x\right) $ be
any two positive continuous increasing to $+\infty $ on $[x_{0},+\infty )$
functions having finite positive \emph{relative }$\left( p,q\right) $\emph{\
-th order} $\rho _{\alpha }^{\left( p,q\right) }\left( \beta \right) $ $%
\left( a<\rho _{\alpha }^{\left( p,q\right) }\left( \beta \right) <\infty
\right) $ where $p$\ and $q$ are any two positive integers. Then the \emph{%
relative }$\left( p,q\right) $\emph{\ -th lower type} $\overline{\sigma }%
_{\alpha }^{\left( p,q\right) }\left( \beta \right) $ of $\beta \left(
x\right) $ with respect to $\alpha \left( x\right) $ is defined as: The
integral \linebreak $\int\limits_{r_{0}}^{\infty }\frac{\log ^{\left[ p-2%
\right] }\alpha ^{-1}\beta \left( r\right) }{\left[ \exp \left( \left( \log
^{\left[ q-1\right] }r\right) ^{\rho _{\alpha }^{\left( p,q\right) }\left(
\beta \right) }\right) \right] ^{k+1}}dr\left( r_{0}>0\right) $ converges
for $k>\overline{\sigma }_{\alpha }^{\left( p,q\right) }\left( \beta \right)
$ and diverges for $k<\overline{\sigma }_{\alpha }^{\left( p,q\right)
}\left( \beta \right) .$
\end{definition}

\begin{definition}
\label{d1.10} Let $\alpha \left( x\right) $ and $\beta \left( x\right) $ be
any two positive continuous increasing to $+\infty $ on $[x_{0},+\infty )$
functions having finite positive relative $\left( p,q\right) $\emph{-th
lower order} $\lambda _{\alpha }^{\left( p,q\right) }\left( \beta \right) $
\linebreak $\left( a<\lambda _{\alpha }^{\left( p,q\right) }\left( \beta
\right) <\infty \right) $. Then the growth indicator $\overline{\tau }%
_{\alpha }^{\left( p,q\right) }\left( \beta \right) $ of $\beta \left(
x\right) $ with respect to $\alpha \left( x\right) $ is defined as :%
\begin{equation*}
\overline{\tau }_{\alpha }^{\left( p,q\right) }\left( \beta \right) =%
\underset{r\rightarrow \infty }{\lim \sup }\frac{\log ^{\left[ p-1\right]
}\alpha ^{-1}\beta \left( r\right) }{\left( \log ^{\left[ q-1\right]
}r\right) ^{\lambda _{\alpha }^{\left( p,q\right) }\left( \beta \right) }}~.
\end{equation*}
\end{definition}

\qquad The above definition can also alternatively defined as:

\begin{definition}
\label{d1.11} Let $\alpha \left( x\right) $ and $\beta \left( x\right) $ be
any two positive continuous increasing to $+\infty $ on $[x_{0},+\infty )$
functions having finite positive \emph{relative }$\left( p,q\right) $\emph{\
-th lower order} $\lambda _{\alpha }^{\left( p,q\right) }\left( \beta
\right) $ $\left( a<\lambda _{\alpha }^{\left( p,q\right) }\left( \beta
\right) <\infty \right) $ where $p$\ and $q$ are any two positive integers.
Then the growth indicator $\overline{\tau }_{\alpha }^{\left( p,q\right)
}\left( \beta \right) $ of $\beta \left( x\right) $ with respect to $\alpha
\left( x\right) $ is defined as: The integral \linebreak $%
\int\limits_{r_{0}}^{\infty }\frac{\log ^{\left[ p-2\right] }\alpha
^{-1}\beta \left( r\right) }{\left[ \exp \left( \left( \log ^{\left[ q-1%
\right] }r\right) ^{\lambda _{\alpha }^{\left( p,q\right) }\left( \beta
\right) }\right) \right] ^{k+1}}dr\left( r_{0}>0\right) $ converges for $k>%
\overline{\tau }_{g}^{\left( p,q\right) }\left( f\right) $ and diverges for $%
k<\overline{\tau }_{g}^{\left( p,q\right) }\left( f\right) .$
\end{definition}

\qquad Now we state the following two theorems without their proofs as those
can easily be carried out with help of Lemma \ref{l1.1} and in the line of
Theorem \ref{t3.1} and Theorem \ref{t3.2} respectively.

\begin{theorem}
\label{t3.4} Let $\alpha \left( x\right) $ and $\beta \left( x\right) $ be
any two positive continuous increasing to $+\infty $ on $[x_{0},+\infty )$
functions having finite positive \emph{relative }$\left( p,q\right) $\emph{\
-th order} $\rho _{\alpha }^{\left( p,q\right) }\left( \beta \right) $
\linebreak $\left( 0<\rho _{\alpha }^{\left( p,q\right) }\left( \beta
\right) <\infty \right) $ and \emph{relative }$\left( p,q\right) $\emph{\
-th lower type} $\overline{\sigma }_{\alpha }^{\left( p,q\right) }\left(
\beta \right) $ where $p$\ and $q$ are any two positive integers. Then
Definition \ref{d1.8} and Definition \ref{d1.9} are equivalent.
\end{theorem}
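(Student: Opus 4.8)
The plan is to follow the template of Theorem~\ref{t3.1} --- whose integral also features the relative $\left(p,q\right)$-th \emph{order} $\rho_{\alpha}^{(p,q)}(\beta)$ in the denominator --- but to borrow from Theorem~\ref{t3.2} the handling of the quantifiers, since $\overline{\sigma}_{\alpha}^{(p,q)}(\beta)$ (Definition~\ref{d1.8}) is a \emph{liminf}-type indicator, exactly like $\tau_{\alpha}^{(p,q)}(\beta)$ (Definition~\ref{d1.6}) and unlike the \emph{limsup} $\sigma_{\alpha}^{(p,q)}(\beta)$. Concretely, every estimate of Theorem~\ref{t3.2} is reproduced with $\lambda_{\alpha}^{(p,q)}(\beta)$ replaced by $\rho_{\alpha}^{(p,q)}(\beta)$ and $\tau_{\alpha}^{(p,q)}(\beta)$ by $\overline{\sigma}_{\alpha}^{(p,q)}(\beta)$. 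The two workhorses are Lemma~\ref{l1.1} and the identity $\log^{[p-2]}\alpha^{-1}\beta(r)=\exp\left(\log^{[p-1]}\alpha^{-1}\beta(r)\right)$, which turns each ratio estimate on $\log^{[p-1]}\alpha^{-1}\beta(r)/(\log^{[q-1]}r)^{\rho_{\alpha}^{(p,q)}(\beta)}$ into a pointwise bound on the integrand of Definition~\ref{d1.9}. I would split into Case~I, $\overline{\sigma}_{\alpha}^{(p,q)}(\beta)=\infty$, and Case~II, $0\le\overline{\sigma}_{\alpha}^{(p,q)}(\beta)<\infty$, establishing both implications in each.

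In Case~I, since $\overline{\sigma}_{\alpha}^{(p,q)}(\beta)$ is a liminf, the value $\infty$ forces the ratio to tend to $+\infty$, so for every $G>0$ the inequality $\log^{[p-1]}\alpha^{-1}\beta(r)>G\left(\log^{[q-1]}r\right)^{\rho_{\alpha}^{(p,q)}(\beta)}$, equivalently $\log^{[p-2]}\alpha^{-1}\beta(r)>\left[\exp\left(\left(\log^{[q-1]}r\right)^{\rho_{\alpha}^{(p,q)}(\beta)}\right)\right]^{G}$, holds for \emph{all} sufficiently large $r$. For Definition~\ref{d1.8}$\Rightarrow$Definition~\ref{d1.9} I would assume for contradiction that the integral with exponent $G+1$ converges; Lemma~\ref{l1.1} then forces $\log^{[p-2]}\alpha^{-1}\beta(r)/[\exp(\cdots)]^{G}\to0$, contradicting the displayed lower bound, so the integral diverges for every finite $G$. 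For the converse, divergence of the same integral yields, for arbitrary $\varepsilon>0$ and all large $r$, the bound $\log^{[p-1]}\alpha^{-1}\beta(r)>(G-\varepsilon)\left(\log^{[q-1]}r\right)^{\rho_{\alpha}^{(p,q)}(\beta)}$, whence the liminf is $\ge G-\varepsilon$ for every $G$, i.e. $\overline{\sigma}_{\alpha}^{(p,q)}(\beta)=\infty$.

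In Case~II I would treat the subcase $0<\overline{\sigma}_{\alpha}^{(p,q)}(\beta)<\infty$ first. From the liminf one extracts two estimates: along a suitable sequence $r_{n}\to\infty$ one has $\log^{[p-1]}\alpha^{-1}\beta(r_{n})<(\overline{\sigma}_{\alpha}^{(p,q)}(\beta)+\varepsilon)(\log^{[q-1]}r_{n})^{\rho_{\alpha}^{(p,q)}(\beta)}$, while for all large $r$ one has the reversed inequality with $\overline{\sigma}_{\alpha}^{(p,q)}(\beta)-\varepsilon$. Exponentiating and dividing by $[\exp((\log^{[q-1]}r)^{\rho})]^{k}$ exactly as in Theorem~\ref{t3.2} gives convergence of the integral for $k>\overline{\sigma}_{\alpha}^{(p,q)}(\beta)$ and divergence for $k<\overline{\sigma}_{\alpha}^{(p,q)}(\beta)$; the degenerate subcase $\overline{\sigma}_{\alpha}^{(p,q)}(\beta)=0$ is handled verbatim as Subcase~(D) there. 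For Definition~\ref{d1.9}$\Rightarrow$Definition~\ref{d1.8} I would feed convergence of the integral with exponent $\overline{\sigma}_{\alpha}^{(p,q)}(\beta)+\varepsilon+1$ into Lemma~\ref{l1.1} to obtain $\liminf\le\overline{\sigma}_{\alpha}^{(p,q)}(\beta)$, and use divergence of the integral with exponent $\overline{\sigma}_{\alpha}^{(p,q)}(\beta)-\varepsilon+1$ to obtain the reverse inequality $\liminf\ge\overline{\sigma}_{\alpha}^{(p,q)}(\beta)$; combining them yields the required equality.

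The main obstacle, as in Theorem~\ref{t3.2}, is bookkeeping the quantifiers correctly: the liminf supplies an eventual lower bound but only a subsequential upper bound, and one must check that each implication invokes an estimate of the right logical strength (eventual versus subsequential) when comparing against the integral. Lemma~\ref{l1.1} is precisely what bridges this gap, since it upgrades mere convergence of the integral into a genuine limit that can be tested against both kinds of bound. Once the quantifiers are tracked, every estimate reduces to a routine exponentiation identical to the corresponding line of Theorem~\ref{t3.2} with $\lambda_{\alpha}^{(p,q)}(\beta)$ replaced by $\rho_{\alpha}^{(p,q)}(\beta)$, so no genuinely new difficulty arises.
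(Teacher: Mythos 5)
Your proposal is correct and matches the paper's intent: the paper omits the proof of Theorem \ref{t3.4} entirely, stating only that it "can easily be carried out with help of Lemma \ref{l1.1} and in the line of Theorem \ref{t3.1} and Theorem \ref{t3.2}," which is exactly the substitution argument you spell out. Your observation that the liminf character of $\overline{\sigma}_{\alpha }^{\left( p,q\right) }\left( \beta \right)$ forces the quantifier pattern (eventual lower bound, subsequential upper bound) to be borrowed from Theorem \ref{t3.2} with $\lambda _{\alpha }^{\left( p,q\right) }\left( \beta \right)$ replaced by $\rho _{\alpha }^{\left( p,q\right) }\left( \beta \right)$, rather than copied verbatim from Theorem \ref{t3.1}, is a sensible and accurate refinement of the paper's remark.
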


\begin{theorem}
\label{t3.5} $\alpha \left( x\right) $ and $\beta \left( x\right) $ be any
two positive continuous increasing to $+\infty $ on $[x_{0},+\infty )$
functions having finite positive \emph{relative }$\left( p,q\right) $\emph{\
-th lower order} $\lambda _{\alpha }^{\left( p,q\right) }\left( \beta
\right) $ \linebreak $\left( 0<\lambda _{\alpha }^{\left( p,q\right) }\left(
\beta \right) <\infty \right) $ and the growth indicator $\overline{\tau }%
_{\alpha }^{\left( p,q\right) }\left( \beta \right) $ where $p$\ and $q$ are
any two positive integers. Then Definition \ref{d1.10} and Definition \ref%
{d1.11} are equivalent.
\end{theorem}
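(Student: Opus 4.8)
The plan is to exploit the fact that the growth indicator $\overline{\tau}_\alpha^{(p,q)}(\beta)$ of Definition \ref{d1.10} is built from a $\limsup$, exactly as the relative $(p,q)$-th type $\sigma_\alpha^{(p,q)}(\beta)$ of Definition \ref{d1.4} was. Consequently, although the statement naturally groups $\overline{\tau}_\alpha^{(p,q)}(\beta)$ with the weak-type family of Theorem \ref{t3.2}, the structural template that transfers directly is the $\limsup$ argument of Theorem \ref{t3.1}: I would run that proof essentially verbatim, replacing the order $\rho_\alpha^{(p,q)}(\beta)$ throughout by the lower order $\lambda_\alpha^{(p,q)}(\beta)$ and replacing $\sigma_\alpha^{(p,q)}(\beta)$ by $\overline{\tau}_\alpha^{(p,q)}(\beta)$. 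Every invocation of Lemma \ref{l1.1} is then made with $A=\lambda_\alpha^{(p,q)}(\beta)$, which is legitimate because $0<\lambda_\alpha^{(p,q)}(\beta)<\infty$ by hypothesis.

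First I would split into the case $\overline{\tau}_\alpha^{(p,q)}(\beta)=\infty$ and the case $0\le\overline{\tau}_\alpha^{(p,q)}(\beta)<\infty$. For Definition \ref{d1.10} $\Rightarrow$ Definition \ref{d1.11} in the infinite case, the $\limsup$ yields, for arbitrary $G>0$ and a sequence $r\to\infty$, the inequality $\log^{[p-2]}\alpha^{-1}\beta(r)>[\exp((\log^{[q-1]}r)^{\lambda_\alpha^{(p,q)}(\beta)})]^{G}$; assuming convergence of the integral with exponent $G+1$ and applying Lemma \ref{l1.1} with $A=\lambda_\alpha^{(p,q)}(\beta)$ and $k=G$ forces the companion quotient to tend to $0$, contradicting the sequence bound, so the integral diverges for every finite $G$. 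In the subcase $0<\overline{\tau}_\alpha^{(p,q)}(\beta)<\infty$, the $\limsup$ supplies simultaneously an upper bound valid for all large $r$, namely $\log^{[p-1]}\alpha^{-1}\beta(r)<(\overline{\tau}_\alpha^{(p,q)}(\beta)+\varepsilon)(\log^{[q-1]}r)^{\lambda_\alpha^{(p,q)}(\beta)}$, which after exponentiation bounds the integrand by $[\exp((\log^{[q-1]}r)^{\lambda_\alpha^{(p,q)}(\beta)})]^{-(k-(\overline{\tau}_\alpha^{(p,q)}(\beta)+\varepsilon))}$ and hence gives convergence for $k>\overline{\tau}_\alpha^{(p,q)}(\beta)$, together with a lower bound along a sequence giving divergence for $k<\overline{\tau}_\alpha^{(p,q)}(\beta)$. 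The degenerate subcase $\overline{\tau}_\alpha^{(p,q)}(\beta)=0$ is handled exactly as Subcase (B) of Theorem \ref{t3.1}.

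For the converse, Definition \ref{d1.11} $\Rightarrow$ Definition \ref{d1.10}, I would feed the convergence of the integral with exponent $\overline{\tau}_\alpha^{(p,q)}(\beta)+\varepsilon+1$ into Lemma \ref{l1.1} (again with $A=\lambda_\alpha^{(p,q)}(\beta)$) to obtain $\limsup_{r\to\infty}\frac{\log^{[p-1]}\alpha^{-1}\beta(r)}{(\log^{[q-1]}r)^{\lambda_\alpha^{(p,q)}(\beta)}}\le\overline{\tau}_\alpha^{(p,q)}(\beta)$, and then use the divergence of the integral with exponent $\overline{\tau}_\alpha^{(p,q)}(\beta)-\varepsilon+1$ to extract a sequence $r\to\infty$ along which the reverse inequality holds up to $2\varepsilon$, giving the matching lower bound; letting $\varepsilon\downarrow0$ and combining the two estimates yields equality, which is Definition \ref{d1.10}.

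I do not expect a genuinely new obstacle, since the mechanism is identical to that of Theorem \ref{t3.1}; the only point requiring care is bookkeeping, namely that one must consistently carry the lower order $\lambda_\alpha^{(p,q)}(\beta)$ rather than the order as the exponent $A$ when applying Lemma \ref{l1.1}, and must reproduce the power comparison in the divergence step — the passage producing the factor $[\exp((\log^{[q-1]}r)^{\lambda_\alpha^{(p,q)}(\beta)})]^{1+\varepsilon}$ that converts the divergent integral into a subsequence lower bound, exactly as in the display preceding $(\ref{3.6})$ — with $\lambda_\alpha^{(p,q)}(\beta)$ in place of $\rho_\alpha^{(p,q)}(\beta)$.
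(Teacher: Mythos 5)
Your proposal is correct and is essentially the proof the paper intends: the paper omits the argument entirely, remarking only that Theorems \ref{t3.4} and \ref{t3.5} can be carried out with the help of Lemma \ref{l1.1} along the lines of Theorems \ref{t3.1} and \ref{t3.2}, and your write-up is exactly that adaptation. Your decision to mirror the $\limsup$ template of Theorem \ref{t3.1} (with $\rho _{\alpha }^{\left( p,q\right) }\left( \beta \right) $ replaced by $\lambda _{\alpha }^{\left( p,q\right) }\left( \beta \right) $ and $\sigma _{\alpha }^{\left( p,q\right) }\left( \beta \right) $ by $\overline{\tau }_{\alpha }^{\left( p,q\right) }\left( \beta \right) $) is the right one, since $\overline{\tau }_{\alpha }^{\left( p,q\right) }\left( \beta \right) $ is defined by a $\limsup $ and Lemma \ref{l1.1} applies for any finite positive exponent $A$.
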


\begin{theorem}
\label{t3.6} $\alpha \left( x\right) $ and $\beta \left( x\right) $ be any
two positive continuous increasing to $+\infty $ on $[x_{0},+\infty )$
functions with $0<\lambda _{\alpha }^{\left( p,q\right) }\left( \beta
\right) \leq \rho _{\alpha }^{\left( p,q\right) }\left( \beta \right)
<\infty $ where $p$\ and $q$ are any two positive integers. Then%
\begin{equation*}
\left( i\right) ~\sigma _{\alpha }^{\left( p,q\right) }\left( \beta \right) =%
\underset{r\rightarrow \infty }{\lim \sup }\frac{\log ^{\left[ p-1\right]
}\alpha ^{-1}\left( r\right) }{\left[ \log ^{\left[ q-1\right] }\beta
^{-1}\left( r\right) \right] ^{\rho _{\alpha }^{\left( p,q\right) }\left(
\beta \right) }},
\end{equation*}%
\begin{equation*}
\left( ii\right) ~\overline{\sigma }_{\alpha }^{\left( p,q\right) }\left(
\beta \right) =\underset{r\rightarrow \infty }{\lim \inf }\frac{\log ^{\left[
p-1\right] }\alpha ^{-1}\left( r\right) }{\left[ \log ^{\left[ q-1\right]
}\beta ^{-1}\left( r\right) \right] ^{\rho _{\alpha }^{\left( p,q\right)
}\left( \beta \right) }},
\end{equation*}%
\begin{equation*}
\left( iii\right) ~\tau _{\alpha }^{\left( p,q\right) }\left( \beta \right) =%
\underset{r\rightarrow \infty }{\lim \inf }\frac{\log ^{\left[ p-1\right]
}\alpha ^{-1}\left( r\right) }{\left[ \log ^{\left[ q-1\right] }\beta
^{-1}\left( r\right) \right] ^{\lambda _{\alpha }^{\left( p,q\right) }\left(
\beta \right) }}
\end{equation*}%
and%
\begin{equation*}
\left( iv\right) ~\overline{\tau }_{\alpha }^{\left( p,q\right) }\left(
\beta \right) =\underset{r\rightarrow \infty }{\lim \sup }\frac{\log ^{\left[
p-1\right] }\alpha ^{-1}\left( r\right) }{\left[ \log ^{\left[ q-1\right]
}\beta ^{-1}\left( r\right) \right] ^{\lambda _{\alpha }^{\left( p,q\right)
}\left( \beta \right) }}~.
\end{equation*}
\end{theorem}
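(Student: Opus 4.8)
The plan is to derive all four identities from a single change of variable, since parts $(i)$--$(iv)$ differ only in whether one takes $\limsup$ or $\liminf$ and whether the exponent is $\rho_\alpha^{(p,q)}(\beta)$ or $\lambda_\alpha^{(p,q)}(\beta)$. First I would record the structural fact that powers the whole argument: since $\beta$ is positive, continuous and strictly increasing to $+\infty$ on $[x_0,+\infty)$, its inverse $\beta^{-1}$ exists, is continuous and strictly increasing, and the map $r\mapsto\beta(r)$ is a homeomorphism of $[x_0,+\infty)$ onto $[\beta(x_0),+\infty)$ with $\beta(r)\to+\infty$ precisely when $r\to+\infty$.

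Next, in the defining expression for $\sigma_\alpha^{(p,q)}(\beta)$, namely $\limsup_{r\to\infty}\frac{\log^{[p-1]}\alpha^{-1}\beta(r)}{(\log^{[q-1]}r)^{\rho_\alpha^{(p,q)}(\beta)}}$ taken from Definition \ref{d1.4}, I would perform the substitution $R=\beta(r)$, equivalently $r=\beta^{-1}(R)$. This replaces $\alpha^{-1}\beta(r)$ by $\alpha^{-1}(R)$ and $\log^{[q-1]}r$ by $\log^{[q-1]}\beta^{-1}(R)$, so that the quotient becomes $\frac{\log^{[p-1]}\alpha^{-1}(R)}{[\log^{[q-1]}\beta^{-1}(R)]^{\rho_\alpha^{(p,q)}(\beta)}}$, which is exactly the integrand of the claimed formula after renaming $R$ back to $r$.

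The one step requiring justification is that this substitution preserves the limit superior (and inferior). This follows from the homeomorphism property: for every threshold $T$ one has $\{\beta(r):r\ge T\}=[\beta(T),+\infty)$, and $\beta(T)\to+\infty$ as $T\to+\infty$; hence the supremum (resp. infimum) of the quotient over the tail $r\ge T$ coincides with its supremum (resp. infimum) over the tail $R\ge\beta(T)$, and letting $T\to+\infty$ yields equality of the two $\limsup$'s (resp. $\liminf$'s). Applying this to the $\limsup$ in Definition \ref{d1.4} establishes part $(i)$.

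Finally, parts $(ii)$, $(iii)$ and $(iv)$ are obtained by the identical substitution applied to Definitions \ref{d1.8}, \ref{d1.6} and \ref{d1.10} respectively: one reads $\liminf$ in place of $\limsup$ for $(ii)$ and $(iii)$, and uses the exponent $\lambda_\alpha^{(p,q)}(\beta)$ in place of $\rho_\alpha^{(p,q)}(\beta)$ for $(iii)$ and $(iv)$. None of these cosmetic differences affects the homeomorphism argument, so all four identities follow simultaneously. I do not anticipate a genuine obstacle here; the only point demanding care is the invariance of $\limsup$ and $\liminf$ under the reparametrization $r\mapsto\beta(r)$, and this is precisely where the hypothesis that $\beta$ is continuous and increasing to $+\infty$ is used.
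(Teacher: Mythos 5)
Your proposal is correct and follows essentially the same route as the paper, whose entire proof is the one-line substitution $\beta(r)=R$ applied to the definitions of $\sigma_{\alpha}^{\left(p,q\right)}\left(\beta\right)$, $\overline{\sigma}_{\alpha}^{\left(p,q\right)}\left(\beta\right)$, $\tau_{\alpha}^{\left(p,q\right)}\left(\beta\right)$ and $\overline{\tau}_{\alpha}^{\left(p,q\right)}\left(\beta\right)$. You merely spell out the justification the paper leaves implicit, namely that the reparametrization $r\mapsto\beta(r)$ preserves $\limsup$ and $\liminf$ because $\beta$ is a continuous increasing bijection onto a neighbourhood of $+\infty$.
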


\begin{proof}
Taking $\beta \left( r\right) =R,$ theorem follows from the definitions of $%
\sigma _{\alpha }^{\left( p,q\right) }\left( \beta \right) ,$ $\overline{%
\sigma }_{\alpha }^{\left( p,q\right) }\left( \beta \right) ,$ $\tau
_{\alpha }^{\left( p,q\right) }\left( \beta \right) $ and $\overline{\tau }%
_{\alpha }^{\left( p,q\right) }\left( \beta \right) $ respectively.
\end{proof}

\qquad In the following theorem we obtain a relationship between $\sigma
_{\alpha }^{\left( p,q\right) }\left( \beta \right) ,$ $\overline{\sigma }%
_{\alpha }^{\left( p,q\right) }\left( \beta \right) ,$ $\overline{\tau }%
_{\alpha }^{\left( p,q\right) }\left( \beta \right) $ and $\tau _{\alpha
}^{\left( p,q\right) }\left( \beta \right) $.

\begin{theorem}
\label{t3.7} Let $\alpha \left( x\right) $ and $\beta \left( x\right) $ be
any two positive continuous increasing to $+\infty $ on $[x_{0},+\infty )$
functions such $\rho _{\alpha }^{\left( p,q\right) }\left( \beta \right)
=\lambda _{\alpha }^{\left( p,q\right) }\left( \beta \right) $ $\left(
0<\lambda _{\alpha }^{\left( p,q\right) }\left( \beta \right) =\rho _{\alpha
}^{\left( p,q\right) }\left( \beta \right) <\infty \right) $ where $p$\ and $%
q$ are any two positive integers, then the following quantities%
\begin{equation*}
\left( i\right) \text{ }\sigma _{\alpha }^{\left( p,q\right) }\left( \beta
\right) ,\text{ }\left( ii\right) \text{ }\tau _{\alpha }^{\left( p,q\right)
}\left( \beta \right) ,\text{ }\left( iii\right) \text{ }\overline{\sigma }%
_{\alpha }^{\left( p,q\right) }\left( \beta \right) \text{\ and \ }\left(
iv\right) \text{ }\overline{\tau }_{\alpha }^{\left( p,q\right) }\left(
\beta \right)
\end{equation*}%
are all equivalent.
\end{theorem}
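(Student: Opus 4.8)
The plan is to reduce all four growth indicators to the \emph{limit superior} and \emph{limit inferior} of one and the same ratio, and then to collapse these two into a common value by invoking the integral representations already established in Theorems \ref{t3.1}, \ref{t3.2}, \ref{t3.4} and \ref{t3.5}. First I would exploit the hypothesis $\rho_{\alpha}^{(p,q)}(\beta)=\lambda_{\alpha}^{(p,q)}(\beta)$ in a purely formal way. Writing
\[
\Phi(r)=\frac{\log^{[p-1]}\alpha^{-1}\beta(r)}{\left(\log^{[q-1]}r\right)^{\rho_{\alpha}^{(p,q)}(\beta)}},
\]
the equality of order and lower order makes the denominators appearing in Definitions \ref{d1.4}, \ref{d1.6}, \ref{d1.8} and \ref{d1.10} literally identical. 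Hence, by inspection of those definitions, $\sigma_{\alpha}^{(p,q)}(\beta)=\overline{\tau}_{\alpha}^{(p,q)}(\beta)=\limsup_{r\to\infty}\Phi(r)$ and $\tau_{\alpha}^{(p,q)}(\beta)=\overline{\sigma}_{\alpha}^{(p,q)}(\beta)=\liminf_{r\to\infty}\Phi(r)$. This already yields two of the required coincidences at no cost and reduces the whole theorem to the single assertion $\limsup_{r\to\infty}\Phi(r)=\liminf_{r\to\infty}\Phi(r)$.

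To establish that last equality I would pass to the integral side rather than argue with $\Phi$ directly. Under the standing hypothesis $\rho=\lambda$, the four integrals featured in Definitions \ref{d1.5}, \ref{d1.7}, \ref{d1.9} and \ref{d1.11} are one and the same improper integral,
\[
J(k)=\int_{r_0}^{\infty}\frac{\log^{[p-2]}\alpha^{-1}\beta(r)}{\left[\exp\left(\left(\log^{[q-1]}r\right)^{\rho_{\alpha}^{(p,q)}(\beta)}\right)\right]^{k+1}}\,dr,\qquad r_0>0.
\]
For large $r$ the base $\exp\!\left((\log^{[q-1]}r)^{\rho}\right)$ exceeds $1$, so the integrand of $J(k)$ is positive and, for each fixed $r$, strictly decreasing in $k$; consequently $J(k)$ is non-increasing in $k$, and there is a single critical exponent $c\in[0,\infty]$ with $J(k)$ convergent for every $k>c$ and divergent for every $k<c$. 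Now Theorem \ref{t3.1} identifies $\sigma_{\alpha}^{(p,q)}(\beta)$ with exactly this threshold $c$, Theorem \ref{t3.4} identifies $\overline{\sigma}_{\alpha}^{(p,q)}(\beta)$ with the threshold of the \emph{same} $J(k)$, and Theorems \ref{t3.2} and \ref{t3.5} (using $\lambda=\rho$) do likewise for $\tau_{\alpha}^{(p,q)}(\beta)$ and $\overline{\tau}_{\alpha}^{(p,q)}(\beta)$. Since a monotone integral cannot possess two distinct convergence thresholds, all four quantities must equal $c$, which is precisely the conclusion sought.

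The main obstacle is exactly this final collapse: it is \emph{not} automatic that $\limsup_{r\to\infty}\Phi(r)=\liminf_{r\to\infty}\Phi(r)$ merely because the order coincides with the lower order, and the only leverage for forcing agreement is that, when $\rho=\lambda$, every one of the convergence/divergence dichotomies proved in Theorems \ref{t3.1}--\ref{t3.5} attaches to the one integral $J(k)$. I would therefore take care with the degenerate boundary situations—$c=0$, $c=\infty$, and the behaviour right at the critical exponent—by re-running the $\varepsilon$-estimates from the proofs of Theorems \ref{t3.1} and \ref{t3.2} with the common exponent $\rho=\lambda$, following Subcases (A), (B), (C) and (D) there verbatim, so that no information is lost at the threshold itself and the four-fold equality is obtained without gaps.
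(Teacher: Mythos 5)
Your proposal is correct and takes essentially the same route as the paper: both arguments read off $\sigma _{\alpha }^{\left( p,q\right) }\left( \beta \right) =\overline{\tau }_{\alpha }^{\left( p,q\right) }\left( \beta \right) $ and $\tau _{\alpha }^{\left( p,q\right) }\left( \beta \right) =\overline{\sigma }_{\alpha }^{\left( p,q\right) }\left( \beta \right) $ directly from the coincidence of the denominators when $\rho _{\alpha }^{\left( p,q\right) }\left( \beta \right) =\lambda _{\alpha }^{\left( p,q\right) }\left( \beta \right) $, and both obtain the remaining collapse $\sigma _{\alpha }^{\left( p,q\right) }\left( \beta \right) =\tau _{\alpha }^{\left( p,q\right) }\left( \beta \right) $ from the integral representations of Definitions \ref{d1.5} and \ref{d1.7}, which under the hypothesis concern one and the same integral and hence a single convergence threshold. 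The only cosmetic difference is that the paper packages this last step as a pointwise comparison of the $\lambda $-integral with the $\rho $-integral (giving $\tau _{\alpha }^{\left( p,q\right) }\left( \beta \right) \geq \sigma _{\alpha }^{\left( p,q\right) }\left( \beta \right) $) combined with the trivial inequality $\sigma _{\alpha }^{\left( p,q\right) }\left( \beta \right) \geq \tau _{\alpha }^{\left( p,q\right) }\left( \beta \right) $, whereas you observe that the two integrals are literally identical; this is the degenerate, and cleaner, form of the same argument.
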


\begin{proof}
From Definition \ref{d1.7}, it follows that the integral $%
\int\limits_{r_{0}}^{\infty }\frac{\log ^{\left[ p-2\right] }\alpha
^{-1}\beta \left( r\right) }{\left[ \exp \left( \left( \log ^{\left[ q-1%
\right] }r\right) ^{\lambda _{\alpha }^{\left( p,q\right) }\left( \beta
\right) }\right) \right] ^{k+1}}dr\left( r_{0}>0\right) $ converges for $%
k>\tau _{\alpha }^{\left( p,q\right) }\left( \beta \right) $ and diverges
for $k<\tau _{\alpha }^{\left( p,q\right) }\left( \beta \right) $. On the
other hand, Definition \ref{d1.5} implies that the integral $%
\int\limits_{r_{0}}^{\infty }\frac{\log ^{\left[ p-2\right] }\alpha
^{-1}\beta \left( r\right) }{\left[ \exp \left( \left( \log ^{\left[ q-1%
\right] }r\right) ^{\rho _{\alpha }^{\left( p,q\right) }\left( \beta \right)
}\right) \right] ^{k+1}}dr\left( r_{0}>0\right) $ converges for $k>\sigma
_{\alpha }^{\left( p,q\right) }\left( \beta \right) $ and diverges for $%
k<\sigma _{\alpha }^{\left( p,q\right) }\left( \beta \right) $.\newline
$\left( i\right) \mathbf{\Rightarrow }\left( ii\right) $\textbf{.}

Now it is obvious that all the quantities in the expression%
\begin{equation*}
\left[ \frac{\log ^{\left[ p-2\right] }\alpha ^{-1}\beta \left( r\right) }{%
\left[ \exp \left( \left( \log ^{\left[ q-1\right] }r\right) ^{\lambda
_{\alpha }^{\left( p,q\right) }\left( \beta \right) }\right) \right] ^{k+1}}-%
\frac{\log ^{\left[ p-2\right] }\alpha ^{-1}\beta \left( r\right) }{\left[
\exp \left( \left( \log ^{\left[ q-1\right] }r\right) ^{\rho _{\alpha
}^{\left( p,q\right) }\left( \beta \right) }\right) \right] ^{k+1}}\right]
\end{equation*}%
are of non negative type. So%
\begin{multline*}
\int\limits_{r_{0}}^{\infty }\left[ \frac{\log ^{\left[ p-2\right] }\alpha
^{-1}\beta \left( r\right) }{\left[ \exp \left( \left( \log ^{\left[ q-1%
\right] }r\right) ^{\lambda _{\alpha }^{\left( p,q\right) }\left( \beta
\right) }\right) \right] ^{k+1}}\right.  \\
\left. -\frac{\log ^{\left[ p-2\right] }\alpha ^{-1}\beta \left( r\right) }{%
\left[ \exp \left( \left( \log ^{\left[ q-1\right] }r\right) ^{\rho _{\alpha
}^{\left( p,q\right) }\left( \beta \right) }\right) \right] ^{k+1}}\right]
dr\left( r_{0}>0\right) \geq 0
\end{multline*}%
\begin{multline*}
i.e.,~\int\limits_{r_{0}}^{\infty }\frac{\log ^{\left[ p-2\right] }\alpha
^{-1}\beta \left( r\right) }{\left[ \exp \left( \left( \log ^{\left[ q-1%
\right] }r\right) ^{\lambda _{\alpha }^{\left( p,q\right) }\left( \beta
\right) }\right) \right] ^{k+1}}dr\geq  \\
\int\limits_{r_{0}}^{\infty }\frac{\log ^{\left[ p-2\right] }\alpha
^{-1}\beta \left( r\right) }{\left[ \exp \left( \left( \log ^{\left[ q-1%
\right] }r\right) ^{\rho _{\alpha }^{\left( p,q\right) }\left( \beta \right)
}\right) \right] ^{k+1}}dr\text{ for }r_{0}>0~.
\end{multline*}%
\begin{equation}
i.e.,~\tau _{\alpha }^{\left( p,q\right) }\left( \beta \right) \geq \sigma
_{\alpha }^{\left( p,q\right) }\left( \beta \right) ~.  \label{3.8}
\end{equation}

\qquad Further as $\rho _{\alpha }^{\left( p,q\right) }\left( \beta \right)
=\lambda _{\alpha }^{\left( p,q\right) }\left( \beta \right) $, therefore we
get that%
\begin{align}
\sigma _{\alpha }^{\left( p,q\right) }\left( \beta \right) & =\text{ }%
\underset{r\rightarrow \infty }{\lim \sup }\frac{\log ^{\left[ p-1\right]
}\alpha ^{-1}\beta \left( r\right) }{\left( \log ^{\left[ q-1\right]
}r\right) ^{\rho _{\alpha }^{\left( p,q\right) }\left( \beta \right) }}
\notag \\
& \geq \text{ }\underset{r\rightarrow \infty }{\lim \inf }\frac{\log ^{\left[
p-1\right] }\alpha ^{-1}\beta \left( r\right) }{\left( \log ^{\left[ q-1%
\right] }r\right) ^{\rho _{\alpha }^{\left( p,q\right) }\left( \beta \right)
}}=\text{ }\underset{r\rightarrow \infty }{\lim \inf }\frac{\log ^{\left[ p-1%
\right] }\alpha ^{-1}\beta \left( r\right) }{\left( \log ^{\left[ q-1\right]
}r\right) ^{\lambda _{\alpha }^{\left( p,q\right) }\left( \beta \right) }}%
=\tau _{\alpha }^{\left( p,q\right) }\left( \beta \right) ~.  \label{3.7}
\end{align}

Hence from $\left( \ref{3.8}\right) $ and $\left( \ref{3.7}\right) $ we
obtain that%
\begin{equation}
\sigma _{\alpha }^{\left( p,q\right) }\left( \beta \right) =\tau _{\alpha
}^{\left( p,q\right) }\left( \beta \right) ~.  \label{3.9}
\end{equation}%
$\left( ii\right) \Rightarrow \left( iii\right) .$

Since $\rho _{\alpha }^{\left( p,q\right) }\left( \beta \right) =\lambda
_{\alpha }^{\left( p,q\right) }\left( \beta \right) $, we get that%
\begin{equation*}
\tau _{\alpha }^{\left( p,q\right) }\left( \beta \right) =\underset{%
r\rightarrow \infty }{\lim \inf }\frac{\log ^{\left[ p-1\right] }\alpha
^{-1}\beta \left( r\right) }{\left( \log ^{\left[ q-1\right] }r\right)
^{\lambda _{\alpha }^{\left( p,q\right) }\left( \beta \right) }}=\underset{%
r\rightarrow \infty }{\lim \inf }\frac{\log ^{\left[ p-1\right] }\alpha
^{-1}\beta \left( r\right) }{\left( \log ^{\left[ q-1\right] }r\right)
^{\rho _{\alpha }^{\left( p,q\right) }\left( \beta \right) }}=\overline{%
\sigma }_{\alpha }^{\left( p,q\right) }\left( \beta \right) ~.
\end{equation*}%
$\left( iii\right) \Rightarrow \left( iv\right) .$

In view of $\left( \ref{3.9}\right) $ and the condition $\rho _{\alpha
}^{\left( p,q\right) }\left( \beta \right) =\lambda _{\alpha }^{\left(
p,q\right) }\left( \beta \right) $, it follows that%
\begin{align*}
\overline{\sigma }_{\alpha }^{\left( p,q\right) }\left( \beta \right) & =%
\underset{r\rightarrow \infty }{\lim \inf }\frac{\log ^{\left[ p-1\right]
}\alpha ^{-1}\beta \left( r\right) }{\left( \log ^{\left[ q-1\right]
}r\right) ^{\rho _{\alpha }^{\left( p,q\right) }\left( \beta \right) }} \\
i.e.,~\overline{\sigma }_{\alpha }^{\left( p,q\right) }\left( \beta \right)
& =\underset{r\rightarrow \infty }{\lim \inf }\frac{\log ^{\left[ p-1\right]
}\alpha ^{-1}\beta \left( r\right) }{\left( \log ^{\left[ q-1\right]
}r\right) ^{\lambda _{\alpha }^{\left( p,q\right) }\left( \beta \right) }}
\end{align*}%
\begin{align*}
i.e.,~\overline{\sigma }_{\alpha }^{\left( p,q\right) }\left( \beta \right)
& =\tau _{\alpha }^{\left( p,q\right) }\left( \beta \right) ~\ \ \ \ \ \ \ \
\ \ \ \ \ \ \ \ \ \ \  \\
i.e.,~\overline{\sigma }_{\alpha }^{\left( p,q\right) }\left( \beta \right)
& =\sigma _{\alpha }^{\left( p,q\right) }\left( \beta \right)
\end{align*}%
\begin{align*}
i.e.,~\overline{\sigma }_{\alpha }^{\left( p,q\right) }\left( \beta \right)
& =\underset{r\rightarrow \infty }{\lim \sup }\frac{\log ^{\left[ p-1\right]
}\alpha ^{-1}\beta \left( r\right) }{\left( \log ^{\left[ q-1\right]
}r\right) ^{\rho _{\alpha }^{\left( p,q\right) }\left( \beta \right) }} \\
i.e.,~\overline{\sigma }_{\alpha }^{\left( p,q\right) }\left( \beta \right)
& =\underset{r\rightarrow \infty }{\lim \sup }\frac{\log ^{\left[ p-1\right]
}\alpha ^{-1}\beta \left( r\right) }{\left( \log ^{\left[ q-1\right]
}r\right) ^{\lambda _{\alpha }^{\left( p,q\right) }\left( \beta \right) }} \\
i.e.,~\overline{\sigma }_{\alpha }^{\left( p,q\right) }\left( \beta \right)
& =\overline{\tau }_{\alpha }^{\left( p,q\right) }\left( \beta \right) ~.
\end{align*}%
$\left( iv\right) \Rightarrow \left( i\right) .$

As $\rho _{\alpha }^{\left( p,q\right) }\left( \beta \right) =\lambda
_{\alpha }^{\left( p,q\right) }\left( \beta \right) ,$ we obtain that%
\begin{equation*}
\underset{r\rightarrow \infty }{\lim \sup }\frac{\log ^{\left[ p-1\right]
}\alpha ^{-1}\beta \left( r\right) }{\left( \log ^{\left[ q-1\right]
}r\right) ^{\lambda _{\alpha }^{\left( p,q\right) }\left( \beta \right) }}=%
\underset{r\rightarrow \infty }{\lim \sup }\frac{\log ^{\left[ p-1\right]
}\alpha ^{-1}\beta \left( r\right) }{\left( \log ^{\left[ q-1\right]
}r\right) ^{\rho _{\alpha }^{\left( p,q\right) }\left( \beta \right) }}%
=\sigma _{\alpha }^{\left( p,q\right) }\left( \beta \right) ~.
\end{equation*}%
Thus the theorem follows.
\end{proof}

\begin{remark}
If we consider $\alpha (x)=M_{g}(x)$ and $\beta (x)=M_{f}(x)$ where $f$ and $%
g$ are any two entire functions with index-pairs $\left( m,q\right) $ and $%
\left( m,p\right) $ respectively where $p,q,m$ are positive integers such
that $m\geq \max (p,q),$ then the above results reduces for the relative $%
\left( p,q\right) $-th growth indicators such as relative $\left( p,q\right)
$-th type, relative $\left( p,q\right) $-th weak type etc. of an entire
function $f$ with respect to another entire function $g$.
\end{remark}

\begin{remark}
If we take $\alpha (x)=T_{g}(x)$ and $\beta (x)=T_{f}(x)$ where $f$ be a
meromorphic function and $g$ be any entire function with index-pairs $\left(
m,q\right) $ and $\left( m,p\right) $ respectively where $p,q,m$ are
positive integers such that $m\geq \max (p,q),$ then the above theorems
reduces for relative $\left( p,q\right) $-th growth indicators such as
relative $\left( p,q\right) $-th type, relative $\left( p,q\right) $-th weak
type etc. of a meromorphic function $f$ with respect to an entire function $g
$.
\end{remark}


\begin{thebibliography}{9}
\bibitem{1} L. Bernal : Crecimiento relativo de funciones enteras. Contribuci%
\'{o}n al estudio de lasfunciones enteras con \'{\i}ndice exponencial
finito, Doctoral Dissertation, University of Seville, Spain, 1984.

\bibitem{2} L. Bernal : Orden relativo de crecimiento de funciones enteras,
Collect. Math. Vol. \textbf{39 }(1988), pp. 209-229.

\bibitem{3} L. Debnath, S. K. Datta, T. Biswas and A. Kar: Growth of
meromorphic functions depending on (p,q)-th relative order, Facta
Universititatis series: Mathematics and Informatics, Vol. 31, No 3 (2016),
pp. 691-705.

\bibitem{4} W.K. Hayman : Meromorphic Functions, The Clarendon Press, Oxford
(1964).

\bibitem{5} L. M. S. Ruiz, S. K. Datta, T. Biswas and G. K. Mondal: On the
(p,q)-th relative order oriented growth properties of entire functions,
Abstract and Applied Analysis, Vol.2014, Article ID 826137, 8 pages,
http://dx.doi.org/10.1155/2014/826137.

\bibitem{6} G. Valiron : Lectures on the General Theory of Integral
Functions, Chelsea Publishing Company,1949.
\end{thebibliography}
\end{document}